\documentclass[11pt, a4paper,pdftex]{amsart} 
\author{Jeffrey Giansiracusa} 

\author{Noah  Giansiracusa} 


\usepackage{mathptmx}
\DeclareSymbolFont{cmlargesymbols}{OMX}{cmex}{m}{n}
\DeclareMathSymbol{\mycoprod}{\mathop}{cmlargesymbols}{"60}


\usepackage[mathcal]{eucal}
\DeclareFontFamily{OT1}{pzc}{}
\DeclareFontShape{OT1}{pzc}{m}{it}{<-> s * [1.10] pzcmi7t}{}
\DeclareMathAlphabet{\mathpzc}{OT1}{pzc}{m}{it}

  \usepackage[pdftex]{graphicx}

\usepackage[protrusion=true,expansion=true]{microtype}

\usepackage{amssymb}
\usepackage{mathrsfs}  
\usepackage{latexsym}
\usepackage{amsmath}
\usepackage{color}
\usepackage{hyperref}

\usepackage[cmtip,arrow]{xy}
\usepackage{pb-diagram,pb-xy}
\dgARROWLENGTH=1.5em 


\usepackage[vmargin=3cm, hmargin=3cm]{geometry}
\parindent=12pt
\parskip=6pt plus3pt minus3pt




\numberwithin{equation}{subsection}



\newtheorem{theorem}{Theorem}[subsection]  
\newtheorem*{theorem*}{Theorem}  
\newtheorem{lemma}[theorem]{Lemma} 
\newtheorem{proposition}[theorem]{Proposition}
\newtheorem{corollary}[theorem]{Corollary}

\theoremstyle{remark} 
\newtheorem{definition}[theorem]{Definition}
\newtheorem{question}[theorem]{Question}
\newtheorem{remark}[theorem]{Remark}
\newtheorem{example}[theorem]{Example}


\setlength{\marginparsep}{0.2cm}
\setlength{\marginparwidth}{2.5cm}
\setlength{\marginparpush}{0.5cm}


\newcommand{\R}{\mathbb{R}}

\newcommand{\T}{\mathbb{T}}
\newcommand{\PP}{\mathbb{P}}

\newcommand{\bend}{\mathpzc{B}}

\newcommand{\supp}{\mathrm{supp}}

\newcommand{\Sym}{\mathrm{Sym}}
\newcommand{\Hom}{\mathrm{Hom}}

\newcommand{\tropker}{\mathrm{tropker}}
\newcommand{\ext}{{\bigwedge}}

\definecolor{purple}{rgb}{0.5,0,0.5}
\definecolor{brown}{rgb}{0.5,0.3,0.1}

\DeclareMathOperator{\im}{im} 


\mathchardef\mhyphen="2D


\usepackage{lipsum}

\newcommand\blfootnote[1]{%
  \begingroup
  \renewcommand\thefootnote{}\footnote{#1}%
  \addtocounter{footnote}{-1}%
  \endgroup
}

\newcommand{\Gr}{\operatorname{Gr}}

\title{A Grassmann algebra for matroids} 



\begin{document}
\begin{abstract}
 We introduce an idempotent analogue of the exterior algebra for which the theory of tropical linear spaces (and valuated matroids) can be seen in close analogy with the classical Grassmann algebra formalism for linear spaces.  The top wedge power of a tropical linear space is its Pl\"ucker vector, which we view as a tensor, and a tropical linear space is recovered from its Pl\"ucker vector as the kernel of the corresponding wedge multiplication map.  We prove that an arbitrary $d$-tensor satisfies the tropical Pl\"ucker relations (valuated exchange axiom) if and only if the $d^\text{th}$ wedge power of the kernel of wedge-multiplication is free of rank one.  This provides a new cryptomorphism for valuated matroids, including ordinary matroids as a special case.  
\end{abstract}
\maketitle

\blfootnote{MSC2010: 05B35, 15A75, 15A80, 15A15, 14T05, 12K10.}
\blfootnote{Keywords: matroid, idempotent, semiring, exterior algebra, tropical linear space.}


\section{Introduction}

Matroids, which provide a combinatorial abstraction of linear dependence over a field, have a deep and fascinating history \cite{Oxley,White1,White2,Katz-matroid}.  Valuated matroids are an enrichment where each basis in the matroid is weighted by an element of a fixed totally ordered abelian group and the usual exchange axiom is replaced by a valuated exchange axiom taking into account these weights \cite{Dress-Wenzel}.  Just as for matroids, there are many equivalent definitions of a valuated matroid, e.g., \cite{Murota-valuated, Murota-circuits, Dress-Wenzel-greedy}.

Associated to every matroid is a polyhedral complex known as the Bergman fan \cite{Bergman,Feichtner-Sturmfels} that uniquely determines the matroid.  Murota-Tamura \cite{Murota-circuits} and Speyer \cite{Speyer} extended this to the valuated case. The resulting polyhedral complexes are called tropical linear spaces since they generalize the tropicalizations of linear subspaces over a valued field and their elements are called vectors. As with the non-valuated case, such a polyhedral complex uniquely determines its valuated matroid.  Thus valuated matroids and tropical linear spaces are two sides of the same coin, one combinatorial and one geometric.  Many concepts in the theory of tropical linear spaces are inspired by their classical counterparts through `tropicalization' in a colloquial sense, yet the tools of the trade and proof techniques rest primarily in the framework of matroid polytopes.   

In this paper we introduce a tropical variant of the exterior algebra of a vector space that makes precise many of these analogies between classical and tropical linear spaces, and which provides a setting in which to study the latter using familiar ideas from (multi)linear algebra.  We derive a novel characterization of valuated matroids in terms of top wedge powers, revealing a striking parallel to vector spaces.  All of our results specialize to the case of ordinary matroids and so reinforce the connections to linear algebra/geometry at the heart of matroid theory.

Exterior algebras have appeared in the context of matroids previously, but playing a quite different role than ours.  The Orlik-Solomon algebra \cite{Orlik-Solomon} and the Whitney algebra \cite{Crapo-Schmitt} are both algebraic invariants of matroids, meaning they associate to each matroid an algebra encoding certain salient features of the matroid but not determining the matroid itself.  Our exterior algebra is associated to a ground set and is used to study all matroids on that ground set simultaneously, inspired by the Pl\"ucker embedding of the Grassmannian.  This is more similar in spirit to Dress and Wenzel's paper \cite{Dress-Wenzel-plucker}, though our constructions and results are nonetheless quite distinct from theirs, perhaps largely due to the emergence of tropical geometry since their paper was published.

\subsection{Summary of results}

Let $S$ be an idempotent semifield.  That is, $S$ satisfies all the axioms of a field except for the existence of additive inverses, and moreover $s+s = s$ for all $s\in S$.  For instance, given a totally ordered abelian group we obtain an idempotent semifield by letting the multiplication be the group operation and addition be maximum with respect to the ordering.  The most important examples, for us, obtained this way are the tropical numbers $\T = (\mathbb{R}\cup\{-\infty\},\max,+)$ and its Boolean subfield $\mathbb{B} = \{0,-\infty\}$.  However, to avoid confusion we shall always write the additive and multiplicative identities as $0$ and $1$, respectively.

Given a free $S$-module $V$ of rank $n < \infty$, we define the \emph{tropical Grassmann algebra} $\ext V$ to be the quotient of the symmetric algebra on $V$ by the relations identifying the squares $e_i^2$ of the elements of a basis $\{e_1,\ldots,e_n\}$ with zero.  Unique to the idempotent setting is that this does not depend on a choice of basis.  Despite the commutativity in the construction, many of the familiar properties of the exterior algebra on a vector space carry over.  For instance, this idempotent algebra is graded and the $d^{\text{th}}$ component $\ext^d V$ is free of rank $\binom{n}{d}$ with basis the elements $e_I := e_{i_1}\wedge\cdots\wedge e_{i_d}$ for each $I=\{i_1,\ldots,i_d\} \subset [n]$; given a collection of vectors $v_1,\ldots,v_d\in V$, the coefficients of $v_1\wedge\cdots\wedge v_d$ in this basis are the minors (in the tropical sense of permanents) of the matrix of coefficients of the $v_i$. More generally, if $V\to Q$ is a surjection of $S$-modules then we define the tropical Grassmann algebra $\ext Q$ to be the pushout of $\Sym~Q \leftarrow \Sym~V \to \ext V$.

A rank $d$ valuated matroid is a map $f : \binom{[n]}{d} \rightarrow S$ satisfying the valuated exchange axioms; we can now view this as a tensor $w = \sum f(I) e_I\in \ext^d V$, and the valuated exchange axiom becomes an algebraic (in fact, tropical, in the sense of \cite{GG1}) condition on the tensors in this idempotent module.  Speyer calls valuated matroids tropical Pl\"ucker vectors, since they play the role for tropical linear spaces that the usual Pl\"ucker vectors of maximals minors do for linear spaces over a field \cite{Speyer,Speyer-Sturmfels}.  The following results, though quite straightforward to prove, reinforce this analogy and illustrate the conceptual clarity in working with the tropical Grassmann algebra.  Let $V = \bigoplus_{i=1}^n Se_i$ as above, and let $V^\vee = \Hom(V,S)$ be the linear dual.
\begin{enumerate}
\item The tropical linear space $L_w \subset V$ associated to a tropical Pl\"ucker vector $w\in\ext^d V$ is the tropical kernel (see Definition \ref{def:tropker}) of the wedge multiplication map $-\wedge w : V \rightarrow \ext^{d+1} V$.
\item If $w_j\in \ext^{d_j}V$ are tropical Pl\"ucker vectors, $j=1,2$, then $w_1\wedge w_2 \in \ext^{d_1+d_2}V$ is the tropical Pl\"ucker vector of the stable sum (the operation dual to stable intersection in tropical geometry) of $L_{w_1}$ and $L_{w_2}$, when this is defined (cf., \S\ref{sec:stablesum}).
\item The rank $d$ Stiefel tropical linear spaces \cite{Fink-Rincon} correspond precisely to the totally decomposable tensors in $\PP\left(\ext^d V\right)$.
\item The valuated matroid elongation of $w\in\ext^d V$ is given by $w \wedge \left(\sum_{i=1}^n e_i\right) \in \ext^{d+1}V$.
\item Fixing an isomorphism $V\cong V^\vee$, there is a Hodge star operator $\star: \ext^d V \stackrel{\cong}{\to} \ext^{n-d} V$ that sends the tropical Pl\"ucker vector of a tropical linear space $L\subset V$ to that of its orthogonal dual $L^\perp \subset V$.
\end{enumerate}
 
Over a field $k$, the tensors in $\ext^d k^n$ which are Pl\"ucker vectors (i.e., lie on the Grassmannian $\Gr(d,n)$ in its Pl\"ucker embedding) are those for which the corresponding wedge multiplication map has $d$-dimensional kernel, which in turn is equivalent to this kernel having 1-dimensional $d^{\text{th}}$ wedge power.  In the tropical setting, to any nonzero tensor $w \in \ext^d V$ we can associate the tropical kernel $L_w\subset V$ of the corresponding wedge map $-\wedge w : V \to \ext^{d+1} V$, and this tropical kernel is a tropical linear space if and only if $w$ is a tropical Pl\"ucker vector.  Our main result, whose proof is significantly more involved than that of the preceding results, is that the $d^{\text{th}}$ tropical wedge power similarly characterizes when $w$ is a tropical Pl\"ucker vector, and hence gives a new cryptomoprhism for (valuated) matroids.  Given an arbitrary vector $w$, we define a quotient $Q_w$ of $V^\vee$ by imposing the bend relations of $(-\wedge w)$; note that $Q_w^\vee = L_w$, though it is not always the case that $L_w^\vee = Q_w$. 
Our main result is:

\begin{theorem*} 
Let $w\in\ext^d V$ be a nonzero element with associated quotient module $Q_w$.  Then $w$ is a tropical Pl\"ucker vector if and only if $\ext^d Q_w$ is free of rank one.  Moreover, in this case the line $\left( \ext^d Q_w\right) ^\vee \in \PP(\ext^d V)$ is the tropical Pl\"ucker vector of the tropical linear space $L_w=\tropker(-\wedge w)$ associated with $w$.
\end{theorem*}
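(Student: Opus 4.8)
The plan is to convert the statement into an explicit question about a congruence on the free module $\ext^d V^\vee$ and then settle it using the (valuated) basis exchange axiom. Write $w=\bigoplus_I f(I)\,e_I$ for $f\colon\binom{[n]}{d}\to S$, and for a $(d+1)$-subset $K$ let $\ell_K=\bigoplus_{j\in K}f(K\setminus j)\,e_j^\vee\in V^\vee$ be the $K$-coordinate of the map $-\wedge w$; by definition $Q_w$ is $V^\vee$ modulo the bend relations of the $\ell_K$. First I would unwind the induced congruence on $\ext^d V^\vee$: it is generated by the pairs $\eta\wedge\ell_K\ \sim\ \bigoplus_{i\in K,\ i\neq j}f(K\setminus i)\,(\eta\wedge e_i^\vee)$, with $\eta$ running over basis monomials of $\ext^{d-1}V^\vee$, $K$ over $(d+1)$-sets, $j$ over $K$. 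Equivalently, identifying $\ext^d V^\vee$ with $(\ext^d V)^\vee$ via the natural pairing, a functional $e_J^\vee\mapsto c_J$ descends to $\ext^d Q_w$ exactly when, for every $(d-1)$-set $J'$ and $(d+1)$-set $K$, the maximum of $\{\,f(K\setminus i)\,c_{J'\cup i}\ :\ i\in K\setminus J'\,\}$ is attained at least twice (or is $0$). I would isolate the two-term instances, those with $J'\subset K$, which read $f(J'\cup j)\,c_{J'\cup j'}=f(J'\cup j')\,c_{J'\cup j}$, and observe that the remaining instances are exactly the valuated exchange relations with one factor of $w$ replaced by the vector $(c_J)$.

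For the forward implication, assume $w$ is a tropical Plücker vector with underlying matroid $M$ (bases $\{I:f(I)\neq0\}$) and fix a basis $B_0$. I would try the functional $\psi\colon\ext^d V^\vee\to S$, $\psi(e_J^\vee)=f(J)/f(B_0)$ (with $f(J)=0$ for $J\notin M$). That $\psi$ descends to $\ext^d Q_w$ is the heart of this direction: the two-term relations are immediate, while the other relations are precisely the valuated exchange axiom for $f$ — one may first reduce the general exchange relations to the three-term ones if convenient. Since $\psi(\bar e_{B_0}^\vee)=1$, this exhibits $S$ as a retract of $\ext^d Q_w$, so $\bar e_{B_0}^\vee$ spans a free rank-one submodule; it remains to see it spans everything. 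For $J\in M$ this follows from connectedness of the basis exchange graph of $M$ together with the two-term relations, which propagate $\bar e_J^\vee=(f(J)/f(B_0))\,\bar e_{B_0}^\vee$ along an exchange path; loops already die in $Q_w$; and for the other non-bases $J$ one uses a bend relation built from a $(d+1)$-set $K$ whose relevant $K\setminus i$ is a basis to force $\bar e_J^\vee=0$. Hence $\ext^d Q_w\cong S$. Finally, under $(\ext^d Q_w)^\vee\hookrightarrow(\ext^d V^\vee)^\vee=\ext^d V$ the generating functional corresponds to $\sum_J(f(J)/f(B_0))\,e_J=f(B_0)^{-1}w$, so the line $(\ext^d Q_w)^\vee$ is $[w]\in\PP(\ext^d V)$, which by the results quoted above is the tropical Plücker vector of $L_w=\tropker(-\wedge w)$.

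For the converse, assume $\ext^d Q_w$ is free of rank one. As it is generated by the images $\bar e_J^\vee$ and $S$ is a semifield, some $\bar e_{J_0}^\vee$ is a generator; rescaling gives an isomorphism $\ext^d Q_w\cong S$ sending $\bar e_{J_0}^\vee\mapsto1$, hence scalars $c_J$ with $\bar e_J^\vee=c_J\,\bar e_{J_0}^\vee$ and a factoring functional $\psi(e_J^\vee)=c_J$. The two-term relations give $f(J'\cup j)\,c_{J'\cup j'}=f(J'\cup j')\,c_{J'\cup j}$; I would use these (and the full family of relations where lower-rank sets intervene) to show that $\{J:f(J)\neq0\}$ is the basis set of a matroid, that $c_J=0$ whenever $f(J)=0$, and that $c_J=\lambda\,f(J)$ for one unit $\lambda$ — here connectedness of the basis exchange graph upgrades the local proportionality from the two-term relations to a global one. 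Substituting $c_J=\lambda f(J)$ into the remaining relations turns each into the corresponding valuated exchange relation for $f$; thus $w$ is a tropical Plücker vector, and the ``moreover'' was proved already in the forward direction.

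The step I expect to be the real obstacle is the two-way translation between bend relations and exchange axioms: in the forward direction, verifying that the candidate $\psi$ respects \emph{every} bend relation and not merely the three-term ones; and in the converse, disentangling the mixed conditions — which a priori constrain $f$ and $(c_J)$ simultaneously — into the separate statements ``$\{f\neq0\}$ is a matroid'', ``$c$ is a scalar multiple of $f$'', and ``$f$ satisfies valuated exchange''. Handling the non-bases about which the naive two-term argument says nothing, because no ambient $(d+1)$-set carries enough basis $d$-subsets, is the part most likely to need a genuinely matroid-theoretic input such as circuit elimination.
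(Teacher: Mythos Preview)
Your forward direction is the paper's argument recast dually: where the paper shows all $x_I$ with $I\in\supp(w)$ become proportional via exchange-graph connectedness and then checks that the remaining bend relations impose nothing further on the resulting cyclic module, you build a retract $\psi$ first and then verify generation. The content is the same. Your treatment of non-bases is incomplete, however: the clause ``one uses a bend relation built from a $(d+1)$-set $K$ whose relevant $K\setminus i$ is a basis'' only handles non-bases adjacent to $\supp(w)$. The paper's Lemma~\ref{lem:vanishing} disposes of arbitrary non-bases by induction on the distance $\min_{K\in\supp(w)}|I\smallsetminus K|$, and you would need the same induction.

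The real issue is your converse. You propose the chain ``$\supp(f)$ is a matroid $\Rightarrow$ $c=\lambda f$ globally $\Rightarrow$ substitute to get the Pl\"ucker relations'', and you correctly flag the first step as the obstacle: you have no way to establish basis exchange for $\supp(f)$ short of something equivalent to the Pl\"ucker relations themselves, so the chain is circular. The paper sidesteps this completely. For each pair $(A,B)$, either all $x_{B+i}$ vanish in $\ext^d Q_w$, in which case Lemma~\ref{lem:vanishing} (whose proof uses no Pl\"ucker hypothesis) forces all $w_{B+i}=0$ and the Pl\"ucker relation for $(A,B)$ is trivial; or some $x_{B+j}\neq 0$, hence $w_{B+j}\neq 0$, and one multiplies the bend relation of $\sum_{i\in A\smallsetminus B} w_{A-i}x_{B+i}$ through by the scalar $w_{B+j}$ and applies only the \emph{local} two-term identities $w_{B+j}x_{B+i}=w_{B+i}x_{B+j}$ (which hold in $\ext^d Q_w$ by construction, for any $w$) to obtain $\bigl(\sum_i w_{A-i}w_{B+i}\bigr)x_{B+j}$ on each side. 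Freeness of $\ext^d Q_w$ then cancels $x_{B+j}$, yielding the Pl\"ucker relation for $(A,B)$ directly. No global proportionality, no matroid axioms, no connectedness of the exchange graph is required---the ``disentangling'' you worry about simply never arises.
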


\subsection*{Acknowledgements}
JG was supported by EPSRC grant EP/I003908/2, and he thanks Johns Hopkins University, where much of this work was carried out.  NG was supported by NSA grant H98230-16-1-0015.  We thank Matt Baker, Alex Fink, Diane Maclagan, Andrew Macpherson, Josh Mundinger, Sam Payne, Felipe Rinc\'on, and an anonymous referee for useful discussions and comments.  
 

\section{Preliminaries}

In this section we establish some notation to be used throughout the paper, and we recall some relevant concepts from the theory of semirings \cite{Golan} and from the scheme-theoretic perspective of tropical geometry \cite{GG1}.  We also discuss the useful concept of the ``tropical kernel'' of a linear map in the tropical setting.

\subsection{Semirings, their modules, and quotients}\label{sec:pre-cong}
Let $S$ denote an idempotent semifield; as mentioned in the introduction, being a \emph{semifield} means $S$ satisfies all the axioms of a field except for the existence of additive inverses, and the \emph{idempotent} property means $s+s=s$ for all $s\in S$.  We will write $0_S$ and $1_S$ for the additive and multiplicative units of $S$, respectively, or simply $0$ and $1$ if there is no chance of confusion.  One of the main examples of an idempotent semifield is the tropical real numbers $\T=(\R\cup \{-\infty\},\max,+)$, and in this semifield $0_\T = -\infty$ and $1_\T = 0$.

For any $S$-module $M$ and integer $d \ge 0$, we let $\Sym^d M$ denote the $d^{\text{th}}$ symmetric power of $M$ and $\Sym~M = \bigoplus_{d \ge 0}\Sym^d M$ the symmetric algebra; these are defined exactly as they are for modules over a ring.

A \emph{congruence} on an $S$-module $M$ is an equivalence relation which is a submodule of $M\times M$; the quotient by this equivalence relation is then an $S$-module, and in fact quotients of $M$ are in bijection with congruences on $M$.  Submodules of $M$ only define a special class of congruence in which certain elements are identified with zero (of course, over a ring every congruence is of this form, since $m\sim m' \Leftrightarrow m-m'\sim 0$).  Similarly, an algebra congruence on an $S$-algebra $A$ is an equivalence relation that is an $S$-subalgebra of $A$, and again these are in bijection with quotient $S$-algebras of $A$.

Given a homomorphism $\varphi : M \rightarrow N$ of $S$-modules, or $S$-algebras, we shall use the term \emph{congruence kernel} to denote the congruence $M\times_N M = \{(m,m')\in M\times M~|~\varphi(m)=\varphi(m')\}$ and reserve the unadorned term \emph{kernel} for the pre-image of zero, which is an ideal.

For more on these topics, one can consult \cite{Golan} in general or \cite[\S2.4]{GG1} for specifics on the concepts mentioned above.

\subsection{Free modules, notation, and conventions} 

Throughout this paper we will fix an idempotent semifield $S$ and a free $S$-module $V$ of finite rank $n$. Note that, unless explicitly stated otherwise, we do not require that $S$ is totally ordered.  For any $S$-module $M$, we denote the \emph{linear dual} by \[M^\vee = \Hom_S(M,S).  \] We will write $\langle - , -\rangle$ for the canonical bilinear pairing $V\times V^\vee \to S$.  Note that there is a canonical isomorphism $V\cong V^{\vee\vee}$.  We will fix a basis $\{e_i\}$ for $V$ and let $\{x_i\}$ denote the dual basis for $V^\vee$; it is characterized by \[ \langle e_i , x_j\rangle = \begin{cases} 1_S & \mbox{if } i=j, \\ 0_S & \mbox{if } i\neq j.\end{cases} \]

\begin{remark}
Although we choose a basis for $V$ and define constructions in terms of this choice, the constructions in this paper are actually independent of the choice.  Indeed, this is implied by the following result, whose proof was kindly communicated to us by Josh Mundinger (in this paper we only use the result for semifields, but we state here Mundinger's more general result).
\end{remark}

\begin{proposition}\label{prop:GL}
	Let $R$ be an idempotent semiring with no zero divisors.
    Then any basis of $R^n$ is unique up to multiplication by units and permutations,
    i.e., $GL(n,R) \cong \Sigma_n \ltimes (R^\times)^n$.
\end{proposition}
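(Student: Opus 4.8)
The plan is to prove the concrete assertion $GL(n,R)\cong\Sigma_n\ltimes(R^\times)^n$ by showing that every invertible $n\times n$ matrix over $R$ is a \emph{monomial matrix} --- a permutation matrix whose nonzero entries have been rescaled by units --- and then to deduce the basis statement from this together with an invariant-basis-number argument. Throughout I would use the canonical partial order on an idempotent semiring, $a\le b\iff a+b=b$: since $0+a=a$ the element $0$ is the minimum, and since $a_i\le\sum_j a_j$ for every $i$, a finite sum in $R$ vanishes if and only if every summand vanishes. In particular the sum of two nonzero elements is nonzero, so the map $\phi\co R\to\mathbb{B}$ sending $0\mapsto 0$ and every nonzero element to $1$ is a semiring homomorphism (additive by the previous sentence, multiplicative because $R$ has no zero divisors). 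Applying $\phi$ entrywise to a pair of mutually inverse matrices witnessing an isomorphism $R^m\cong R^n$ produces an isomorphism $\mathbb{B}^m\cong\mathbb{B}^n$, and comparing cardinalities ($|\mathbb{B}^k|=2^k$) forces $m=n$; hence every basis of $R^n$ has exactly $n$ elements and the transition matrix between two bases lies in $GL(n,R)$.

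So let $A\in GL(n,R)$ with two-sided inverse $B$. For $i\ne j$ the $(i,j)$ entry of $AB$ is $\sum_k A_{ik}B_{kj}=0$, so each summand $A_{ik}B_{kj}$ vanishes, and since $R$ has no zero divisors, for every $k$ either $A_{ik}=0$ or $B_{kj}=0$. For each $i$, the diagonal relation $\sum_k A_{ik}B_{ki}=1\ne 0$ supplies an index $\sigma(i)$ with $A_{i,\sigma(i)}\ne 0$ and $B_{\sigma(i),i}\ne 0$. If $A_{ik'}\ne 0$ for some $k'\ne\sigma(i)$, then the $(\sigma(i),k')$ entry of $BA$, which equals $0$, is a sum containing the nonzero term $B_{\sigma(i),i}A_{i,k'}$ --- a contradiction. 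Thus row $i$ of $A$ has a unique nonzero entry, in column $\sigma(i)$, so $A$ has exactly $n$ nonzero entries, one per row. If $i\mapsto\sigma(i)$ were not injective, some column of $A$ would be entirely zero and the corresponding diagonal entry of $BA$ would be a sum of products each involving that column, hence $0\ne 1$; so $\sigma$ is a bijection and $A=D_uP_\sigma$, where $P_\sigma$ is the permutation matrix of $\sigma$ and $D_u$ is diagonal with entries $u_i:=A_{i,\sigma(i)}\ne 0$. Running the same analysis on $B$ and comparing diagonal entries of $AB=I$ and $BA=I$ exhibits a two-sided inverse for each $u_i$ among the entries of $B$, so $u_i\in R^\times$; conversely every such matrix is visibly invertible.

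To finish, I would observe that the diagonal matrices with unit entries form a subgroup isomorphic to $(R^\times)^n$ that is normal in $GL(n,R)$ (conjugating $D_u$ by $P_\sigma$ merely permutes its diagonal entries), with quotient $\Sigma_n$, giving $GL(n,R)\cong\Sigma_n\ltimes(R^\times)^n$. Unwinding this, any basis of $R^n$ is obtained from the standard one by applying some $\sigma\in\Sigma_n$ and then rescaling each vector by a unit, which is exactly the asserted uniqueness.

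The main obstacle is the support analysis in the middle paragraph: since there is no subtraction one cannot perform Gaussian elimination, so the monomial structure must be extracted purely from the vanishing of sums of products, and this is precisely where both hypotheses are essential --- idempotency, so that a sum vanishes only when all of its terms do, and the absence of zero divisors, so that each product $A_{ik}B_{kj}$ vanishes only for the evident reason. A smaller point, invisible over a commutative ring, is that producing genuine (two-sided) inverses for the $u_i$ requires using $AB=I$ and $BA=I$ in tandem rather than either one alone.
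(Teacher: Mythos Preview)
Your argument is correct and rests on the same two facts the paper uses: in an idempotent semiring a finite sum vanishes only if each summand does, and the absence of zero divisors then lets one conclude that a vanishing product has a vanishing factor. The organisation differs, however. The paper argues by induction on $n$: after locating a single pair $a_n^n, b_n^n$ with $a_n^n b_n^n \neq 0$, it uses the vanishing of $\sum_j a_n^j b_j^k$ for $k\neq n$ to force $b_n^k=0$, deduces $f_n = b_n^n e_n$ with $b_n^n$ a unit, and reduces to the span of the remaining $n-1$ basis vectors. You instead treat all rows of $A$ at once, playing the off-diagonal vanishing in $AB$ against that in $BA$ to show each row has a unique nonzero entry and that the resulting map $\sigma$ is a bijection. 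Your write-up is also somewhat more complete than the paper's: you supply an invariant-basis-number argument (via the semiring map $R\to\mathbb{B}$) to justify that any basis of $R^n$ has exactly $n$ elements, and you spell out the semidirect product structure of $GL(n,R)$, both of which the paper leaves implicit.
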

\begin{proof}
	Suppose $e_1, \ldots, e_n$ and $f_1, \ldots, f_n$ are two bases for $R^n$.
    Then we may write 
    \begin{equation*}
    	e_i = \sum_{j} a_i^j f_j,	\qquad 
    	f_i = \sum_j b_i^j e_j.
    \end{equation*}
    Thus $e_i = \sum_{j,k} a_i^j b_j^k e_k$,
i.e., $\sum_j a_i^j b_j^i = 1$ and for $k \neq i$ we have 
    \begin{equation} \label{matrix-multiplication}
  		\sum_j a_i^j b_j^k = 0.
    \end{equation}
    Assume by permuting our bases that $a_n^nb_n^n \neq 0$.
    Thus, both $a_n^n$ and $b_n^n$ are non-zero.
    Idempotency implies that each term in the sum in \eqref{matrix-multiplication} must be zero, so 
    $a_n^n b_n^k = 0$ for all $k \neq n$.
    Since $R$ has no zero divisors, we then have $b_n^k = 0$ for all $k \neq n$.
    In particular, $f_n = b_n^n e_n$.  By the same argument with the roles of the bases reversed, we have $a_n^k = 0$ for all $k \ne n$ and $e_n = a_n^n f_n$.
	Moreover, we deduce that $e_n = a_n^nb_n^n e_n$, and so since $e_n$ is an element of a basis we must have 	$a_n^nb_n^n = 1$, hence $a^n_n$ and $b^n_n$ are both units.
    Thus, $\langle e_1, \ldots, e_{n-1} \rangle = \langle f_1, \ldots, f_{n-1}\rangle$
    and so the result follows by induction on $n$.   
\end{proof}

We use the notation $[n] := \{1,\ldots,n\}$, and for $I\subset [n]$ we write $I+j := I\cup\{j\}$ and $I-j := I\smallsetminus\{j\}$.

\subsection{Tropical hyperplanes and the bend relations}\label{sec:pre-bend}

Let $f = \sum f_ix_i \in V^\vee$ be a linear form.  When $S=\T$, the \emph{tropical hyperplane} of $f$ has been defined as the locus in $\T^n$ where the maximum in $f$ is attained at least twice or $f$ attains the value $-\infty = 0_\T \in \T$ \cite{Sturmfels-first-steps,Mikhalkin-ICM}.  Note that this definition makes sense not just for $\T$ but for any totally ordered idempotent semifield.

In \cite{GG1} we proposed that tropical hyperplanes are the solution sets to systems of $S$-linear equations canonically associated with their defining linear forms.  These equations are called the \emph{bend relations} and they exist not just for $S$ a totally ordered idempotent semifield, but any idempotent semiring.

\begin{definition}
The \emph{bend relations} of a linear form $f = \sum f_ix_i \in V^\vee$ are the relations
\[\left\{f\sim \sum_{i\ne j}f_ix_i\right\}_{j=1}^n.\]  We write $\bend(f)$ for the $S$-module congruence generated by these relations.  
The \emph{tropical hyperplane} $f^\perp \subset V$ is the linear dual of the quotient:
\[f^\perp = (V^\vee/\bend(f))^\vee \subset V^{\vee\vee} \cong V.\]  If $L \subset V^\vee$ is a submodule, then $\bend(L)$ denotes the congruence generated by $\bend(f)$ for all $f\in L$.  If $M$ is an arbitrary module and $\varphi :V \to M$ is a linear map then we write $\bend(\varphi)$ for the congruence $\bend(L)$ where $L\subset V^\vee$ is the image of the dual map $\varphi^\vee: M^\vee \to V^\vee$. 
\end{definition}

\begin{remark}\label{rem:bend}
Here are some comments on these definitions.
\begin{enumerate}
\item The condition that $v = \sum v_i e_i \in V$ satisfies the bend relations of a linear form $f$ says that any single term in the summation $f(v) = \sum f_i v_i$ can be omitted without changing the value of the sum.  When $S=\T$, or any other totally ordered semifield, this
is equivalent to requiring that the maximum element of the sequence $\{f_i v_i\}_{i=1}^n$ is attained
at least twice (or there is only a single term and it vanishes), for if the maximum were attained
exactly once then deleting the unique maximal term would strictly decrease the value of the sum.   See  \cite[Proposition 5.1.6]{GG1}.
\item In \cite{GG1} this notion of tropical hyperplane was termed a ``set-theoretic bend locus'' to avoid confusion with two distinct scheme-theoretic notions of a hypersurface in the tropical setting.  Since in this paper we primarily discuss modules and linear forms rather than polynomials and schemes, we feel free to use the term tropical hyperplane here without confusion.
\item By transitivity and idempotency, the congruence $\bend(f)$ is also generated by
\[\{\sum_{i \ne j} f_ix_i \sim \sum_{i \ne k} f_ix_i\}_{j,k\in [n]}\]
\item The notation $f^\perp$ is borrowed from a more general notion of tropical orthogonal dual discussed later (see \S\ref{sec:duals}).
\item For a submodule $L\subset V^\vee$, we have \[(V^\vee/\bend(L))^\vee = \bigcap_{f\in L} f^\perp \subset V.\]
\end{enumerate}
\end{remark}

\subsection{Tropical kernels}\label{sec:pre-ker}

Tropical kernels first appeared in \cite{Sturmfels-first-steps} as the tropical analogue of the kernel of a linear map between free $\T$-modules.  Here we recall this notion and generalize it mildly.

\begin{definition}\label{def:tropker}
  Let $M$ be an $S$-module and $\varphi:V\to M$ a linear map.  The \emph{tropical kernel of $\varphi$} is
  the submodule $\tropker(\varphi)\subset V$ consisting of all those elements
  $\sum_{i=1}^n v_i e_i \in V$ such that
  \[
  \varphi\left(\sum_i v_i e_i\right) = \varphi\left(\sum_{i\neq j} v_i e_i \right)
  \]
for each $j \in [n]$.  Thus \[\tropker(\varphi) = (V^\vee/\bend(\varphi))^\vee.\]
\end{definition}

That is, $\tropker(\varphi) \subset V$ is the set of points $v\in V$ that satisfy all the relations in the congruence $\bend(\varphi)$, which is to say the set of $v\in V$ such that $\langle v, -\rangle: V^\vee \to S$ descends to $V^\vee/\bend(\varphi)$.
When $M$ is free, say of rank $m$, then the map $\varphi$ is given by an $m$-tuple of linear forms $\varphi_i\in V^\vee$ and $\tropker(\varphi)$ is the intersection of the corresponding tropical hyperplanes: 
\[\tropker(\varphi) = \bigcap_{i=1}^m \varphi_i^\perp.\]

In particular, if $f:M \to S$ is a linear form then the tropical kernel of $f$ is exactly the tropical hyperplane defined by $f$.

\begin{example}\label{example:trop-ker}
Let $S=\mathbb{T}$ and consider the linear map $\varphi: S^3 \to S^2$ defined by the matrix 
\[
\begin{pmatrix} 
0 & 1 & 2\\
0 & 0 & -\infty
\end{pmatrix},
\]
(note that 0 here is the multiplicative unit in $\T$).  The congruence $\bend(\varphi)$ is generated by the relations 
\begin{align*}
x_1 + 1x_2 &  \sim x_1 + 2x_3 \sim 1x_2 + 2x_3 \\
x_1  & \sim x_2.
\end{align*}
Looking at the $x_1=0$ slice, the first line of relations carves out a tropical line (Y-graph) with vertex at $(0,-1,-2)$, and the second cuts out the vertical line $x_2 = 0$, as shown in Figure \ref{fig:trop-ker-example}.  The tropical kernel of $\varphi$ is then the intersection of these two tropical hyperplanes, which is spanned by $(0,0,-1)$.  In this case the tropical kernel is indeed a tropical linear space, but if the second row of the matrix had instead been $(0, 1, -\infty)$ then the resulting tropical kernel would instead be the span of the vectors $(0,-1,-2)$ and $(0,-1,-\infty)$, corresponding to the lower leg of the Y, which is not a tropical linear space.
\end{example}

\begin{figure}\label{fig:trop-ker-example}
\begin{center}
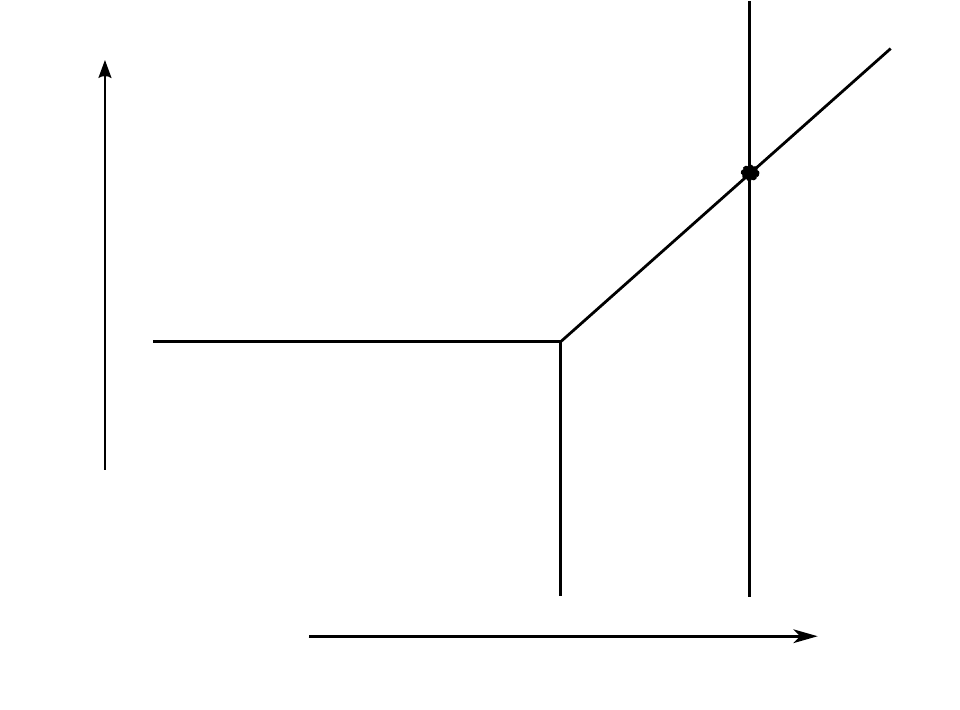
\end{center}
\caption{Illustration of the $e_1=0$ slice of two tropical hyperplanes whose intersection is the tropical kernel of the linear map from Example \ref{example:trop-ker}.}
\end{figure}


\section{An idempotent exterior algebra}

In this section we define the exterior algebra of a free module, and of a quotient of a free module, over an idempotent semifield, and we observe some (limited) similarities with the usual exterior algebra of modules over a ring.

\subsection{Exterior algebra of free modules}

Since bases for finitely generated free $S$-modules are unique up to rescaling and permutation (Proposition \ref{prop:GL}), we have:

\begin{lemma}\label{lem:basis}
The $S$-algebra congruence on $\Sym~V$ generated by $e^2_i \sim 0$ is independent of the choice of basis $\{e_i\}$.
\end{lemma}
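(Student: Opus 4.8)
The plan is to reduce everything to Proposition~\ref{prop:GL}. Let $\{e_i\}$ and $\{f_i\}$ be two bases of $V$. By Proposition~\ref{prop:GL} there is a permutation $\sigma\in\Sigma_n$ and units $u_1,\dots,u_n\in S^\times$ such that $f_i = u_i e_{\sigma(i)}$ for every $i$. Write $C_e$ for the $S$-algebra congruence on $\Sym V$ generated by $\{e_i^2\sim 0\}_{i=1}^n$ and $C_f$ for the one generated by $\{f_i^2\sim 0\}_{i=1}^n$; the goal is $C_e = C_f$, and since each is by definition the smallest congruence containing its listed generators it suffices to check that every generator of $C_f$ lies in $C_e$ and vice versa.

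The one observation to record first is that an $S$-algebra congruence, being in particular an $S$-submodule of $A\times A$, is closed under componentwise scalar multiplication: if $(a,b)$ lies in a congruence and $s\in S$ then so does $(sa,sb)$. Applying this with $s=u_i^2$ to the generator $e_{\sigma(i)}^2\sim 0$ of $C_e$ yields $u_i^2 e_{\sigma(i)}^2\sim 0$, that is $f_i^2\sim 0$, inside $C_e$; as $i$ ranges over $[n]$ and $\sigma$ is a bijection this shows all generators of $C_f$ belong to $C_e$, so $C_f\subseteq C_e$. The reverse inclusion is symmetric: since $u_i$ is a unit, $e_{\sigma(i)}^2 = u_i^{-2} f_i^2$, so scaling the generator $f_i^2\sim 0$ of $C_f$ by $u_i^{-2}$ produces $e_{\sigma(i)}^2\sim 0$ in $C_f$, and again letting $i$ run through $[n]$ gives $C_e\subseteq C_f$. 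Hence $C_e=C_f$.

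There is no real obstacle here; the argument is short once Proposition~\ref{prop:GL} is in hand, which is precisely why that proposition was isolated first. The only subtlety worth flagging is that it is essential that the two bases differ merely by a permutation and by rescaling by \emph{units} — this is what makes passing between $f_i^2\sim 0$ and $e_{\sigma(i)}^2\sim 0$ an invertible operation within a congruence; over an idempotent semiring with zero divisors bases need not have this form and the independence statement could fail.
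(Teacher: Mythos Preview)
Your proof is correct and follows the same approach as the paper, which simply cites Proposition~\ref{prop:GL} and leaves the rest implicit. You have spelled out explicitly the elementary step that rescaling by a unit and permuting generators preserves the congruence, which is exactly what the paper intends the reader to see.
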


This allows us to make the following definition:

\begin{definition}\label{def:TGA}
The \emph{tropical Grassmann algebra} of $V$ is the quotient of $\Sym~V$ by the $S$-algebra congruence generated by the relations in Lemma \ref{lem:basis}: \[ \ext V := \Sym~V/\langle e^2_i \sim 0 \rangle_{i=1}^n.\]
The grading on $\Sym~V$ descends to a grading on $\ext V$, and we call the direct summand $\ext^d V$ the $d^{\text{th}}$ \emph{tropical wedge power} of $V$.
\end{definition}

\begin{remark}
A few words are in order.
\begin{enumerate}
\item There is nothing inherently ``tropical'' in this construction, but we choose the name due to the relevance it will play for tropical linear spaces, discussed subsequently.
\item We use the notation $\ext V$ for the tropical Grassmann algebra and denote the product in it by $\wedge$ by analogy with the usual exterior algebra and wedge product over a ring, even though the multiplication here is commutative.  This commutativity is an artifact of the nonexistence of the element $-1$, the additive inverse of the multiplicative unit, in an idempotent semiring.
\item It is crucial that we only kill the squares of basis elements in this definition, since if $a+b = 0$ in an idempotent module then $a=b=0$.
\item This tropical $\ext$ only forms an endofunctor on the category of finite rank free $S$-modules with respect to the restricted class of module homomorphisms which send basis elements to scalar multiples of basis elements; these are given by matrices where each column has at most one nonzero entry.
\end{enumerate}
\end{remark}

The tropical Grassmann algebra $\ext V = \bigoplus_{d \ge 0}\ext^d V$ is free as an $S$-module, with the wedge power $\ext^d V$ free of rank $\binom{n}{d}$ for $d \le n$ and trivial for $d > n$; a basis is given by \[\{e_{i_1}\wedge \cdots \wedge e_{i_d}~|~1 \le i_1 \le \cdots \le i_d \le n\}\] as usual.  Since this wedge product is commutative, we shall abbreviate our notation by writing \[e_I := e_{i_1} \wedge \cdots \wedge e_{i_d}\] for any $I = \{i_1,\ldots,i_d\} \subset [n]$.  It is straightforward to see that the perfect pairing $V\times V^\vee \rightarrow S$ induces a perfect pairing $\ext^d V \times \ext^d (V^\vee) \rightarrow S$, so that $(\ext^d V)^\vee$ is canonically isomorphic to $\ext^d(V^\vee)$ and the elements $\{x_I\}$, where $x_I := x_{i_1} \wedge \cdots \wedge x_{i_d}$, form a dual basis to $\{e_I\}$.

Recall that the permanent of a square matrix is defined by the same formula as the determinant except with the sign of the permutation omitted \cite{Permanents}.  The permanent plays the role in the idempotent setting that the determinant does for rings and some authors prefer the term \emph{tropical determinant}; since there is no possibility for confusion, we will therefore use the term \emph{minor} to refer to the permanent of a square submatrix of a matrix over an idempotent semiring.

\begin{proposition}\label{prop:minors}
If  $v_j = \sum_{i=1}^n a_{ij}e_i$, for $1 \le j \le d$, then the coefficient of $e_I$ in $v_1\wedge \cdots \wedge v_d \in \ext^d V$ is the $I$-minor of the matrix $(a_{ij})$.  
\end{proposition}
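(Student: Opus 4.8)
The plan is to expand $v_1\wedge\cdots\wedge v_d$ by brute force, using only the distributive law (valid in any semiring) and the two defining features of $\ext V$: the relation $e_i\wedge e_i\sim 0$ and the commutativity of the product. First I would multiply out
\[
v_1\wedge\cdots\wedge v_d = \sum_{(i_1,\ldots,i_d)\in[n]^d} a_{i_11}\cdots a_{i_dd}\; e_{i_1}\wedge\cdots\wedge e_{i_d},
\]
the sum ranging over all ordered $d$-tuples of indices from $[n]$.

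The first key step is to discard the degenerate terms: if a tuple $(i_1,\ldots,i_d)$ has $i_k=i_\ell$ for some $k\neq\ell$, then $e_{i_1}\wedge\cdots\wedge e_{i_d}$ has $e_{i_k}\wedge e_{i_k}$ as a factor, which is $0$, and since $0$ is absorbing in a semiring the whole term vanishes. So only tuples with pairwise distinct entries survive. The second key step uses commutativity: for such a tuple, with underlying set $I=\{i_1,\ldots,i_d\}$, we have $e_{i_1}\wedge\cdots\wedge e_{i_d}=e_I$ regardless of the order of the entries. Viewing a tuple $(i_1,\ldots,i_d)$ as the function $j\mapsto i_j$, the surviving tuples with underlying set $I$ are exactly the bijections $[d]\to I$; hence the coefficient of $e_I$ is $\sum_r a_{r(1)1}\cdots a_{r(d)d}$, summed over all bijections $r\colon[d]\to I$. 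Finally, identifying $I$ with $[d]$ through its unique order-preserving bijection rewrites this as $\sum_{\pi\in\Sigma_d}\prod_{j=1}^d b_{\pi(j)j}$, where $(b_{kj})_{k,j\in[d]}$ is the square submatrix of $(a_{ij})$ keeping only the rows indexed by $I$; by definition this is the permanent of that submatrix, i.e.\ the $I$-minor of $(a_{ij})$. (If $d>n$ there are no such $I$, consistent with $\ext^d V=0$.)

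I do not expect a real obstacle here; the proof is a routine bookkeeping argument. The only points that need a moment's care are that the distributive law is genuinely available in a semiring, that a monomial containing a repeated basis vector is honestly zero (there is no cancellation in an idempotent semiring, so the coefficient of $e_I$ is the full semiring sum — a maximum, in the case $S=\T$ — over all bijections, not something smaller), and that the passage from ordered tuples of distinct indices to pairs consisting of a set $I$ together with a bijection $[d]\to I$ involves no double counting. One could alternatively induct on $d$ using the known $S$-basis $\{e_I\}$ of $\ext^d V$ together with the wedge multiplication map, but the direct expansion above seems the most transparent.
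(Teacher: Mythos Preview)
Your proposal is correct and is precisely the ``formal manipulation'' the paper alludes to; the paper's own proof is a one-line remark that the computation proceeds exactly as for the usual exterior algebra, and you have simply written that computation out in full. There is nothing to add.
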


\begin{proof}
This follows from the exact same formal manipulation as for the usual exterior algebra.
\end{proof}

\begin{remark}
Over a field, it is primarily this relation to determinants that renders the exterior algebra so useful, especially for studying linear dependence of vectors \cite{Grassmann}.  In the idempotent setting, the relevant replacement for linear dependence is encapsulated by the notion of a matroid and the geometry is that of tropical linear spaces.  As we shall see below, permanents only capture certain aspects of tropical linear dependence, but the tropical Grassmann algebra nonetheless plays a very similar role for tropical linear spaces as the usual exterior algebra does for linear spaces.
\end{remark}

\subsection{Exterior algebra of quotient modules}\label{sec:grassquot}
We now turn to the case of a module that is not necessarily free.  The role played by a special class of vectors, the basis vectors in the case of a free module, appears indispensable in our construction, so we shall restrict attention to modules over $S$ that are presented as a quotient of a free module.  This is not a particularly artificial restriction from the perspective of tropical geometry, where for instance the tropicalization of an affine variety depends on its embedding in affine space (cf., \cite{Payne}) and hence one studies coordinate algebras that are presented as quotients of free commutative algebras.

\begin{definition}\label{def:quotfree}
The \emph{tropical Grassmann algebra} of an $S$-module $M$, relative to a quotient presentation $V \twoheadrightarrow M$, is the tensor product \[\ext M := \ext V\otimes_{\Sym~V}\Sym~M.\]
\end{definition}

Concretely, this means we quotient the symmetric algebra $\Sym~M$ by setting the squares of the images of the basis vectors $e_i$ under the map $V \twoheadrightarrow M$ to be zero.  As before, this is graded, $\ext M = \bigoplus_{d \ge 0}\ext^dM$, and as in the case of commutative rings, this tensor product of commutative algebras over a semiring can be computed degree-wise:  $\ext^dM = \ext^dV\otimes_{\Sym^dV}\Sym^dM$.

We can also view $\ext M$ as a quotient of $\ext V$.  Later on, we shall need the following result which provides an explicit description of this latter presentation:

\begin{lemma}\label{lem:quot}
Suppose the congruence kernel of $V\twoheadrightarrow M$ is generated by the relations $\{u_i \sim v_i\}$. Then the congruence kernel of $\ext^d V \twoheadrightarrow \ext^d M$ is generated by the relations
\[
u_i \wedge e_I \sim v_i \wedge e_I
\]
for $I\in \binom{[n]}{d-1}$.
\end{lemma}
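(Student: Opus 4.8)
The plan is to compute $\ext^d M$ in two ways and compare. By Definition~\ref{def:quotfree} applied in each graded piece, $\ext^d M = \ext^d V \otimes_{\Sym^d V} \Sym^d M$, so the congruence kernel of $\ext^d V \twoheadrightarrow \ext^d M$ is the pushforward to $\ext^d V$ of the congruence kernel of $\Sym^d V \twoheadrightarrow \Sym^d M$ along the multiplication map $\Sym^d V \to \ext^d V$. The first step is therefore to identify generators for the congruence kernel of $\Sym^d V \twoheadrightarrow \Sym^d M$ in terms of the given generators $\{u_i \sim v_i\}$ of the degree-one congruence kernel of $V \twoheadrightarrow M$. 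Since a congruence on a graded algebra that is generated (as an algebra congruence) in degree one restricts in degree $d$ to the module congruence generated by $\{u_i \cdot m \sim v_i \cdot m\}$ for $m$ ranging over degree-$(d-1)$ monomials $e_J$, $J \in \binom{[n]}{d-1}$ (in fact over a spanning set of $\Sym^{d-1} V$), this is a routine unwinding: multiplying a relation by a sum of monomials and using that congruences are submodules lets us reduce to monomial multipliers, and $\Sym^{d-1}V$ is spanned by the $e_J$.

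The second step is to push this forward along $\Sym^d V \to \ext^d V$. A clean way to organize this is via the general fact that if $\pi : A \to B$ is a surjection of $S$-modules and $\mathcal{C}$ is the module congruence on $A$ generated by a set $R$, then the congruence kernel of the composite $A \twoheadrightarrow A/\mathcal{C} \cdot$ (i.e.\ the image congruence on $B$) is generated by $\ker(\pi)$-relations together with the images $\pi(R)$; when $\pi$ itself is the quotient by the algebra congruence $\langle e_i^2 \sim 0\rangle$, the kernel relations are already absorbed. Thus the congruence kernel of $\ext^d V \twoheadrightarrow \ext^d M$ is generated by the images of $u_i \cdot e_J \sim v_i \cdot e_J$, which are exactly $u_i \wedge e_I \sim v_i \wedge e_I$ for $I \in \binom{[n]}{d-1}$ — noting that if $I$ has a repeated index the relation becomes $0 \sim 0$ and can be dropped, so we may take $I$ to be a genuine $(d-1)$-subset. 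This gives the claimed generating set.

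I expect the main obstacle to be the bookkeeping around how a module congruence interacts with the algebra quotient $\Sym V \twoheadrightarrow \ext V$: one must check that no new relations are needed beyond the images of the generators, i.e.\ that taking the image of a congruence under a surjection commutes with "congruence generated by" in the appropriate sense, and that the relation $e_i^2 \sim 0$ does not secretly force additional identifications among the $u_i \wedge e_I$. The potential subtlety is purely in the idempotent/semiring setting — over a ring one would just say everything is an ideal and argue with generators of ideals — so care is needed that "submodule of $M \times M$" behaves as expected under these operations. Once that is handled, the identification of the pushed-forward generators with $u_i \wedge e_I \sim v_i \wedge e_I$ is immediate from the definition of the wedge product as the descent of symmetric multiplication, and discarding the degenerate $I$'s with repeats finishes the proof.
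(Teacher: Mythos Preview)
Your proposal is correct and follows essentially the same route as the paper's proof: first identify generators for the congruence kernel of $\Sym^d V \twoheadrightarrow \Sym^d M$ by multiplying the degree-one relations $u_i \sim v_i$ by degree-$(d{-}1)$ monomials, then push these forward along $\Sym^d V \to \ext^d V$ using the pushout description $\ext^d M = \ext^d V \otimes_{\Sym^d V} \Sym^d M$ and discard the monomials with repeated indices. The paper's version is terser (it simply invokes ``properties of pushouts of modules'' where you spell out the congruence-pushforward mechanics), but the argument is the same. One small notational slip: when you write ``$m$ ranging over degree-$(d-1)$ monomials $e_J$, $J \in \binom{[n]}{d-1}$'' you should allow multisets $J$ (monomials in $\Sym^{d-1}V$ may have repeated factors); you clearly intend this, since you later drop the repeated-index cases after passing to $\ext^d V$, but the notation should match.
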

\begin{proof}
 Since $\Sym~M$ is presented as the free algebra $\Sym~V$ modulo the $S$-algebra congruence generated by the relations
  $u_i = v_i$, it follows that the $S$-module $\Sym^d M$ is presented as the free module $\Sym^d V$ modulo the $S$-module congruence generated by the relations produced by multiplying $u_i \sim v_i$ by a degree $d-1$ monomial in the $e_i$.  It then follows directly from properties of pushouts of modules that $\ext^d M$ is presented as $\ext^d V$ modulo the $S$-module congruence generated by the relations produced by multiplying $u_i \sim v_i$ by a square-free monomial in the $e_i$.
\end{proof}

Given a module $M$, the algebra $\ext M$ depends crucially on the choice of a presentation of $M$ as a quotient of a free module, even if $M$ itself is free, as this example shows.

\begin{example}
Consider $M=S^2$.  Relative to the presentation of $M$ as a trivial quotient of the free module $S^2$, we have $\ext^2 M \cong S$.  Now consider $M$ as a quotient of $S^3$ via the surjection $\pi: S^3 \to S^2=M$ corresponding to the matrix
\[
\begin{pmatrix}
1 & 1 & 0 \\
1 & 0 & 1
\end{pmatrix}
\]
with respect to bases $\{e_1, e_2, e_3\}$ and $\{f_1, f_2\}$.  
Relative to this presentation, the module $\ext^2 M$ is spanned by degree 2 monomials in the $f_i$ modulo the relations $\pi(e_i)^2 \sim 0$. Since $\pi(e_1) = f_1 + f_2$, we have $0\sim \pi(e_1)^2 = f_1^2 + f_2^2 + f_1 f_2$, and hence $\ext^2 M = 0$ since $a+b = 0$ implies $a = b = 0$ in an idempotent monoid.
\end{example}

However, a straightforward consequence of the preceding lemma is that we do have compatibility between the absolute and relative constructions if $M$ is free and the quotient is coordinate projection.

\begin{corollary}
If $V \twoheadrightarrow M$ is projection onto the free module spanned by a subset of the basis, then the tropical Grassmann algebra of $M$ coincides with that of $M$ relative to this presentation.
\end{corollary}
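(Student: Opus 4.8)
The plan is to reduce the statement entirely to Lemma \ref{lem:quot}. Write $T\subseteq[n]$ for the subset such that $M=\bigoplus_{i\in T}Se_i$, and let $\pi\colon V\twoheadrightarrow M$ be the given coordinate projection, so $\pi(e_i)=e_i$ for $i\in T$ and $\pi(e_i)=0$ for $i\notin T$. The congruence kernel of $\pi$ is generated by the single family of relations $\{e_i\sim 0\}_{i\notin T}$ (by the usual direct-sum-plus-idempotency argument, $v\sim v'$ under this congruence precisely when $v$ and $v'$ have the same $T$-coordinates, which is exactly $\pi(v)=\pi(v')$). So we have an explicit presentation of $\pi$ to feed into Lemma \ref{lem:quot}.

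First I would invoke Lemma \ref{lem:quot} with these generators: it gives that, for each $d$, the congruence kernel of the surjection $\ext^d V\twoheadrightarrow\ext^d M$ defining the relative construction is generated by the relations $e_i\wedge e_I\sim 0$ for $i\notin T$ and $I\in\binom{[n]}{d-1}$. Now $e_i\wedge e_I$ is $0$ when $i\in I$ and equals the basis element $e_{I+i}$ otherwise; moreover every $J\subseteq[n]$ with $J\not\subseteq T$ arises as such an $I+i$ (take $i\in J\smallsetminus T$ and $I=J-i$), while every $I+i$ produced this way satisfies $I+i\not\subseteq T$. Hence, after discarding the vacuous relations $0\sim 0$, the generating relations are exactly $\{e_J\sim 0\}_{J\not\subseteq T,\ |J|=d}$.

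Next I would identify this quotient with the absolute construction. These relations generate the congruence on $\ext^d V$ under which $x\sim y$ iff $x+n=y+n'$ for some $n,n'$ in the coordinate submodule $N=\bigoplus_{J\not\subseteq T}Se_J$; using the internal direct sum $\ext^d V=\bigl(\bigoplus_{J\subseteq T}Se_J\bigr)\oplus N$ together with the fact that $a+b=0$ forces $a=b=0$ in an idempotent module, one checks that $x\sim y$ precisely when $x$ and $y$ have the same component in $\bigoplus_{J\subseteq T}Se_J$. Thus the relative $\ext^d M$ is free with basis $\{e_J : J\subseteq T,\ |J|=d\}$, which is literally the basis of the $d^{\text{th}}$ wedge power of $M$ in the absolute sense of Definition \ref{def:TGA}; and the product inherited from $\ext V$ agrees with the absolute wedge product on these basis elements (both send $e_J\wedge e_{J'}$ to $e_{J\cup J'}$ when $J\cap J'=\varnothing$ and to $0$ otherwise). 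Assembling over all $d$ yields the desired isomorphism of graded $S$-algebras, natural in the evident sense.

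I do not expect any genuine obstacle: once Lemma \ref{lem:quot} is available the corollary is a formality. The single point that warrants a sentence of care is the computation of the congruence generated by $\{e_J\sim 0\}_{J\not\subseteq T}$ — one must observe that idempotency prevents any ``collateral'' identification among the surviving basis vectors $e_J$, $J\subseteq T$, so that the quotient is honestly free on the expected basis and not merely generated by it. (One could equally bypass the lemma and argue directly that, since $\pi(e_i)=0$ for $i\notin T$, the relations $\pi(e_i)^2\sim 0$ defining the relative $\ext M$ reduce to $\{e_i^2\sim 0\}_{i\in T}$, which is the absolute definition for the free module $M$; but routing through Lemma \ref{lem:quot} keeps the bookkeeping uniform with the rest of the section.)
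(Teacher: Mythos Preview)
Your proposal is correct and follows exactly the approach the paper indicates: the paper gives no detailed proof but simply states the corollary as a straightforward consequence of Lemma \ref{lem:quot}, and your argument is a careful execution of precisely that route. The only addition you make beyond the bare invocation of the lemma is the (necessary) verification that killing the basis vectors $e_J$ with $J\not\subseteq T$ leaves the remaining ones free, which is indeed the one point requiring idempotency and is worth spelling out.
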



\section{Tropical Pl\"ucker vectors and tropical linear spaces}
We will now describe how tropical Pl\"ucker vectors and tropical linear spaces fit into
  the framework of the tropical Grassmann algebra in a way that is almost entirely parallel to the
  classical picture over a field.

\subsection{Tropical Pl\"ucker vectors} 

As defined by Speyer \cite{Speyer}, a rank $d$ tropical Pl\"ucker vector is an element
$(v_I) \in \mathbb{R}^{\binom{n}{d}}$ satisfying
the ``three-term tropical Pl\"ucker relations'': for every $J\in\binom{[n]}{d-2}$ and $\{i,j,k.l\}\subset J^c$, the maximum 
\[
\max \{v_{Jij}+v_{Jkl}, v_{Jik}+v_{Jjl}, v_{Jil}+v_{Jjk}\}
\]
is attained at least twice.  This was extended to the tropical numbers $\T = \mathbb{R}\cup\{-\infty\}$ where some coordinates are allowed to be infinite (see, e.g., \cite{Maclagan-Sturmfels}); in this generality, one must use all the quadratic relations obtained by tropicalizing the standard generating set for the classical ideal of Pl\"ucker relations.  By using the formalism of \S\ref{sec:pre-bend} we can extend this notion further to arbitrary $S$ and recast it in the setting of the tropical Grassmann algebra.

\begin{definition}
A rank $d$ \emph{tropical Pl\"ucker vector} is a nonzero $v = \sum v_Ie_I \in \ext^d V$ whose image under the natural map
$\ext^dV \rightarrow \Sym^2\ext^d V$
lies on the tropical hyperplanes defined by the functions
\[
\sum_{i \in A\smallsetminus B} x_{A-i}x_{B+i} \in \Sym^2 \left( \ext^d V^\vee \right), \quad A\in \binom{[n]}{d+1}, B\in \binom{[n]}{d-1}.
\]
Concretely, this means that the \emph{tropical Pl\"ucker relations} hold for $v$: 
\[
\sum_{i \in A\smallsetminus B} v_{A-i}v_{B+i} = \sum_{i \in A\smallsetminus B, i\neq p} v_{A-i}v_{B+i},
\]
where $A$ and $B$ range over the subsets indicated above, and $p$ ranges over the monomials terms on the left-hand side of this equality.
\end{definition}

Speyer noted that tropical Pl\"ucker vectors in his sense are the same as valuated matroids, supported on the uniform matroid, with coefficients in $\T$ \cite{Speyer,Dress-Wenzel}; when $S$ is totally ordered, the tropical Pl\"ucker vectors defined above are the same as valuated matroids, again in the sense of Dress and Wenzel, with coefficients in the abelian group $S^\times$.  In particular, when $S=\mathbb{B}$ is the booleans, rank $d$ tropical Pl\"ucker vectors are equivalent to rank $d$ matroids on the ground set $[n]$.  More precisely, for a rank $d$ matroid on $[n]$ defined by its set of bases $\mathcal{B} \subset \binom{[n]}{d}$, the corresponding tropical Pl\"ucker vector is $\sum_{I\in\mathcal{B}}e_I$, and an arbitrary element $\sum_{I\in \mathcal{I}}e_I\in\ext^d\mathbb{B}^n$ satisfies the tropical Pl\"ucker relations if and only if the collection $\mathcal{I} \subset \binom{[n]}{d}$ is the set of bases of a rank $d$ matroid on $[n]$.

\subsection{Tropical linear spaces}

The significance to Speyer of tropical Pl\"ucker vectors is that they determine $\T$-modules called
``tropical linear spaces,'' which are a common generalization of the Bergman fan of a matroid \cite{Bergman, Feichtner-Sturmfels} and of the tropicalization of a linear subspace over a valued field \cite{Speyer,Speyer-Sturmfels}.  In this section we set out a straightforward reformulation of Speyer's construction in terms of the tropical Grassmann algebra.

Classically, which is to say over a field $k$, one recovers a $d$-dimensional linear subspace $W \subset k^n$ as the kernel of the wedge-multiplication map 
\[
- \wedge w : k^n \rightarrow \ext^{d+1} k^n,
\]
where $w \in \ext^d k^n \cong k^{\binom{n}{d}}$ is the Pl\"ucker vector of $W$, which is well-defined up to multiplication by a nonzero scalar.  This won't work in the idempotent setting if applied verbatim; for instance, we recover a 1-dimensional subspace as the wedge-kernel if and only if it is spanned by a basis vector (for example, if $v = e_1+e_2$ then $v\wedge v = e_1\wedge e_2 \ne 0$, whereas if $v = e_1$ then $v\wedge v = 0$).  The key is to replace the kernel with the tropical kernel introduced in \S\ref{sec:pre-ker}.

The tropical linear space $L_w\subset \T^n$ associated
to a tropical Pl\"ucker vector $w=(w_I)\in \T^{\binom{n}{d}}$ is by definition \cite{Speyer,Maclagan-Sturmfels} the intersection of the tropical
hyperplanes defined by the linear forms
\[
\sum_{i\in J} w_{J-i} x_i\in (\T^n)^\vee
\] 
for all $J\in \binom{[n]}{d+1}$.  Each of these linear forms is a valuated circuit vector of $w$, and each circuit appears at least once; associated with a $(d+1)$-element set $J$ is the unique circuit contained in $J$.  We can extend this definition without any trouble to define tropical linear spaces $L_w \subset V$ for a free module over an arbitrary idempotent semifield $S$.  With the tropical kernel and tropical Grassmann algebra replacing their classical counterparts, we see that this definition of Speyer is indeed what one would hope for based on the classical situation.

\begin{proposition}\label{prop:wedgekerTLS}
The tropical linear space $L_w \subset V$ associated to a tropical Pl\"ucker vector 
\[
w = \sum_{I \in \binom{[n]}{d}} w_Ie_I \in \ext^d V
\]
is the tropical kernel of the wedge-multiplication map
\[
- \wedge w :V \rightarrow \ext^{d+1} V.
\] 
\end{proposition}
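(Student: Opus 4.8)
The plan is to make the wedge‑multiplication map $-\wedge w$ completely explicit in coordinates and then read off that its tropical kernel is exactly the intersection of tropical hyperplanes defining $L_w$. First I would write $v = \sum_{i=1}^n v_i e_i \in V$ and expand $v\wedge w$ bilinearly, using the defining relations of $\ext V$: the monomial $e_i\wedge e_I$ vanishes when $i\in I$ (since $e_i^2\sim 0$) and equals $e_{I+i}$ when $i\notin I$ --- crucially with no sign and no scalar, since the product in $\ext V$ is commutative. Collecting the surviving terms according to the $(d+1)$‑subset $J=I+i$ gives
\[
v\wedge w \;=\; \sum_{J\in\binom{[n]}{d+1}} \Bigl(\,\sum_{i\in J} v_i\, w_{J-i}\Bigr)\, e_J,
\]
the same formal manipulation that underlies Proposition \ref{prop:minors}.

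Next I would observe that, with respect to the standard basis $\{e_J\}$ of $\ext^{d+1}V$, the map $-\wedge w$ is therefore given by the tuple of linear forms $\varphi_J := \sum_{i\in J} w_{J-i}\, x_i\in V^\vee$, one for each $J\in\binom{[n]}{d+1}$. By the description of the tropical kernel of a map into a free module given just after Definition \ref{def:tropker}, we then have $\tropker(-\wedge w)=\bigcap_J \varphi_J^\perp$. Comparing with the recalled definition of $L_w$ --- the intersection of the tropical hyperplanes of precisely the forms $\sum_{i\in J} w_{J-i}\,x_i$, $J\in\binom{[n]}{d+1}$ --- shows that these two submodules of $V$ coincide, which is the claim.

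I do not expect a genuine obstacle here: the entire content is the coordinate expansion of $v\wedge w$, and once the tropical kernel and the idempotent exterior algebra are in place the statement is a one‑line matching of definitions. The only point that requires a moment's care is confirming that the coefficient of $e_J$ in $v\wedge w$ really is $\sum_{i\in J} v_i w_{J-i}$ with every term present --- no cancellation, no rescaling --- which is exactly where the specific structure of $\ext V$ (killing only squares of basis vectors, together with commutativity) enters. I would also note in passing that the hypothesis that $w$ is a tropical Pl\"ucker vector plays no role in this argument: it is needed only so that the term ``tropical linear space'' applies to $L_w$, and the same computation identifies $L_w=\tropker(-\wedge w)$ for an arbitrary nonzero $w\in\ext^d V$.
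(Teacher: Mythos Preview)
Your argument is correct and is essentially identical to the paper's proof: both expand $v\wedge w$ bilinearly, use $e_i\wedge e_I=0$ for $i\in I$ to identify the $e_J$-coefficient as $\sum_{i\in J} v_i\,w_{J-i}$, and then match this against the defining linear forms of $L_w$. Your remark that the Pl\"ucker hypothesis is irrelevant to the computation is also in line with the paper, which later uses $L_w=\tropker(-\wedge w)$ for arbitrary nonzero $w$.
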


\begin{proof}
We must show that, for each $J\in \binom{[n]}{d+1}$, the $J$-component of the homomorphism $-\wedge w$ is the linear form $\sum_{i\in J} w_{J-i} x_i$.  Given any $v =\sum_{i=1}^n v_i e_i \in V$, we have 
\[
v\wedge w = \sum_{i=1}^n \sum_{I\in \binom{[n]}{d}} v_i w_I e_I \wedge e_i = \sum_{J \in
  \binom{[n]}{d+1}} \sum_{i\in J} v_i w_{J-i} e_J,
\]
since $e_I\wedge e_i =0$ when $i\in I$, as desired.
\end{proof}

Note that $L_w = L_{sw}$ for all nonzero $s\in S$.  For this reason we will often view tropical Pl\"ucker vectors as elements of the projectivization $\PP\left(\ext^d V\right)$, just as in the classical setting.

\subsection{Submodules versus quotients}\label{sec:Qw}

Having just seen that the passage from a tropical Pl\"ucker vector to the corresponding tropical linear space proceeds quite analogously to the classical situation, it is natural to ask about the converse.  We will address this fully in \S\ref{sec:topwedge}, but for now we note an important contrast to the classical situation that requires a slight change in perspective.

Over a field $k$, given a $d$-dimensional subspace $L\subset k^n$, the space $\ext^d L$ is a
line in $\ext^d k^n$, and the Pl\"ucker vector of $L$ spans this line.  We will be concerned
with a tropical analogue of this picture, but to proceed we must deal with the fact that the
tropical Grassmann algebra is defined \emph{only for quotients of free modules} (\S\ref{sec:grassquot}), and in particular, it is not defined for submodules of free modules.  

\begin{definition}
The \emph{tropical quotient module} associated to an element $w\in \ext^d V$ is
\[
Q_w := V^\vee /\bend(-\wedge w),
\]
where $\bend(-\wedge w)$ denotes the congruence defined in \S\ref{sec:pre-ker} generated by the bend relations of the linear map $-\wedge w : V \rightarrow \ext^{d+1}V$.
\end{definition}

Let us set $L_w := \tropker(-\wedge w)$.  By Proposition \ref{prop:wedgekerTLS}, if $w$ is a tropical Pl\"ucker vector then $L_w$ is the associated tropical linear space, but for any $w$ we can consider the submodule $L_w \subset V$ and the quotient module $V^\vee \twoheadrightarrow Q_w$.  While the former is very natural to study from the geometric perspective, one of the insights of this paper is that, from the algebraic perspective the latter is a more natural object to work with.

\begin{example}\label{ex:graphic}
Consider the graphic matroid $M(K_4)$ on the complete graph with four vertices.  This has a geometric representation given by the following diagram \cite[Appendix p.640]{Oxley}:
\begin{center}
\begin{figure}[h!]
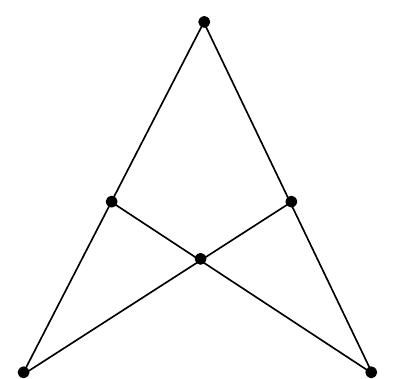
\caption{A geometric representation of the graphic matroid $M(K_4)$.}
\end{figure}
\end{center}
The bases are therefore the 16 triples\[\mathcal{B} = \binom{[6]}{3} \setminus \{\{1,2,3\},\{1,4,5\},\{2,5,6\},\{3,4,6\}\}.\]  The tropical Pl\"ucker vector is $w = \sum_{I\in\mathcal{B}}e_I \in \ext^3\mathbb{B}^6$, and the linear map \[-\wedge w: \mathbb{B}^6 \rightarrow \ext^4\mathbb{B}^6 \cong \mathbb{B}^{15}\] is given by the matrix 
\[
\small{
\left(\begin{array}{c|cccccc}
 & e_1 & e_2 & e_3 & e_4 & e_5 & e_6\\
\hline
e_{1234} & 0 & 0 & 0 & -\infty & -\infty & -\infty\\
e_{1235} & 0 & 0 & 0 & -\infty & -\infty & -\infty\\
e_{1236} & 0 & 0 & 0 & -\infty & -\infty & -\infty\\
e_{1245} & 0 & -\infty & -\infty & 0 & 0 & -\infty\\
e_{1246} & 0 & 0 & -\infty & 0 & -\infty & 0\\
e_{1256} & -\infty & 0 & -\infty & -\infty & 0 & 0\\
e_{1345} & 0 & -\infty & -\infty & 0 & 0 & -\infty\\
e_{1346} & -\infty & -\infty & 0 & 0 & -\infty & 0\\
e_{1356} & 0 & -\infty & 0 & -\infty & 0 & 0\\
e_{1456} & 0 & -\infty & -\infty & 0 & 0 & -\infty\\
e_{2345} & -\infty & 0 & 0 & 0 & 0 & -\infty\\
e_{2346} & -\infty & -\infty & 0 & 0 & -\infty & 0\\
e_{2356} & -\infty & 0 & -\infty & -\infty & 0 & 0\\
e_{2456} & -\infty & 0 & -\infty & -\infty & 0 & 0\\
e_{3456} & -\infty & -\infty & 0 & 0 & -\infty & 0
\end{array}\right).
}
\]
Each row of this matrix determines a linear form corresponding to a circuit of the matroid, with each circuit occuring at least once.  The rows corresponding to a given circuit $C$  are in bijection with the sets that are one larger than a basis and contain $C$.
For instance, each of the  first three rows yields the form $x_1+x_2+x_3$, which corresponds to the circuit $\{1,2,3\}$, and this circuit is contained in three subsets of size 4: $\{1,2,3,4\}, \{1,2,3,5\},$ and  $\{1,2,3,6\}$. The tropical linear space $L_w \subset \mathbb{B}^6$ is the intersection of the 15 tropical hyperplanes defined by these linear forms (of course, there are redundancies in this intersection).  On the other hand, the module $Q_w$ is the quotient of the free module on $x_1,\ldots,x_6$ by the bend relations determined by each of these linear forms; for instance, the first row yields the relations $x_1+x_2 \sim x_1+x_3 \sim x_2+x_3$.  We shall return to this example later.
\end{example}

\begin{remark}\label{rem:bend-duality-conjecture}
As discussed in \S\ref{sec:pre-ker}, we have $Q_w^\vee \cong L_w$ for any $w$.  When $d=n-1$, so that $L_w$ is a tropical hyperplane, we proved in \cite[Theorem 5.2.2]{GG1} that $Q_w \cong
L_w^\vee$, and it seems likely that this holds for all $d$, at least when $S$ is in a certain class
of well-behaved idempotent semifields that includes $\T$ and $\mathbb{B}$.  Frenk showed that if $w$ is the tropical Pl\"ucker vector of any non-valuated matroid, then the $\mathbb{B}$-module $L_w^\vee$ is in bijection with the flats of this matroid \cite[Remark on p.89]{Frenk}.
\end{remark}

In the next subsection we shall study the notion of tropical orthogonality and use it to explore another relation between $L_w$ and $Q_w$.

\subsection{Dualities, the tropical Hodge star, and generators for tropical linear spaces}\label{sec:duals}

Here we review the duality theory of tropical linear spaces and discuss its relation with a tropical Hodge star operator that we introduce.

Given a submodule $L\subset V$, it's \emph{orthogonal dual} $L^\perp \subset V^\vee$ is the submodule defined by
\[
L^\perp = \bigcap_{\alpha \in L} \tropker(\alpha).
\]
Note that for arbitrary submodules, we always have $L\subset L^{\perp \perp}$, although this inclusion need not be an equality unless $L$ is a tropical linear space, as we will see below.

\begin{example}
Consider the submodule $L=\mathrm{span}\: \{e_1+e_2, e_2+e_3\}$ in $V=S^3$.
We then have $L^\perp=\mathrm{span}\:\{x_1+x_2+x_3\}$, but $L^{\perp\perp}=\mathrm{span}\:\{e_1+e_2, e_2+e_3, e_1 + e_3\}$, so $L$ is strictly contained in $L^{\perp \perp}$.
\end{example}

Fix an identification $\ext^n V \cong S$.  The wedge product of complementary degrees gives a perfect pairing $\ext^d V \times \ext^{n-d} V \to \ext^n V \cong S$, and so we obtain an identification
\begin{equation}
\star : \ext^{n-d}V \cong \ext^d V^\vee.
\end{equation}
which we will call the \emph{tropical Hodge star}.  Concretely, fixing $e_1 \wedge \cdots \wedge e_n$ as the generator of $\ext^n V$, the map $\star$ is given by $e_I \mapsto x_{I^c}$, where $I^c := [n]\smallsetminus I$.   

Dress \cite{Dress} and Speyer \cite{Speyer} introduced a duality for valuated matroids, extending the orthogonal dual for ordinary matroids; in terms of the Hodge star, it defined as
\[
w\mapsto \star w.
\]
One can verify that if $w$ satisfies the tropical Pl\"ucker relations then so does $\star w$.

Consider the maps 
\[
F = -\wedge w: V \to \ext^{d+1} V, 
\]
and 
\[
G = -\wedge \star w: V^\vee \to \ext^{n-d+1} V^\vee.
\]
The image of the dual $F^\vee$ is spanned by the vectors 
\[
\alpha_J = \sum_{i\in J} w_{J-i}x_i \in V^\vee,
\]
for $J\in \binom{[n]}{d+1}$, and the image of $G^\vee$ is spanned by the vectors 
\[
\beta_K = \sum_{i \in K^c} w_{K+i}e_i \in V,
\]
for $K\in \binom{[n]}{d-1}$.  We will occassionally write $\alpha(w)_J$ and $\beta(w)_K$ when
we need to make the dependence on $w$ explicit. Note that the $\alpha$ vectors associated with $w$ are precisely equal to the $\beta$ vectors associated with $\star w$, and vice versa, via the correspondences
\[
\alpha(w)_J = \beta(\star w)_{J^c} \text{ and } \beta(w)_K = \alpha(\star w)_{K^c}.
\]
When $w$ is a tropical Pl\"ucker vector the vectors $\alpha_J$ are the called the \emph{valuated circuits} of $w$ and the vectors $\beta_K$ are called the \emph{valuated cocircuits} of $w$.

\begin{proposition}\label{prop:inclusions}
The following are equivalent:
\begin{enumerate}
\item $w$ is a tropical Pl\"ucker vector,
\item $\im(F^\vee) \subset \tropker (G)$,
\item $\im(G^\vee) \subset \tropker (F)$.
\end{enumerate}
\end{proposition}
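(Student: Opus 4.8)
The plan is to prove the equivalence of (1), (2), (3) by showing that each of conditions (2) and (3) is, after unwinding the definitions, literally a restatement of the tropical Plücker relations; the symmetry between (2) and (3) will then be explained by the Hodge star. First I would record the elementary observation that for a linear map $\varphi : W \to N$, a vector $\xi \in W$ lies in $\tropker(\varphi)$ if and only if $\varphi(\xi)$ satisfies the bend relations component-by-component, i.e.\ $\langle \xi, -\rangle$ kills $\bend(\varphi)$; and more generally, a \emph{submodule} $L \subset W$ satisfies $L \subset \tropker(\varphi)$ iff every generator of $L$ does. So to check (2), $\im(F^\vee) \subset \tropker(G)$, it suffices to check that each spanning vector $\alpha_J \in V^\vee$ of $\im(F^\vee)$ lies in $\tropker(G)$, and by the free-target description of tropical kernel in \S\ref{sec:pre-ker}, this in turn means: for each $J \in \binom{[n]}{d+1}$ and each $(n-d+1)$-subset $K' \subset [n]$ indexing a component of $G = -\wedge \star w : V^\vee \to \ext^{n-d+1} V^\vee$, the $K'$-component of $\alpha_J \wedge \star w$ satisfies the bend relations.

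The second step is the computation of this $K'$-component. Writing $\alpha_J = \sum_{i\in J} w_{J-i} x_i$ and $\star w = \sum_{I\in\binom{[n]}{d}} w_I x_{I^c}$, the wedge product $\alpha_J \wedge \star w$ in $\ext V^\vee$ picks up the term $x_i \wedge x_{I^c}$ whenever $i \notin I^c$, i.e.\ $i \in I$; collecting by the resulting $(n-d+1)$-set $\{i\} \cup I^c$, and reindexing via complements (set $B = K' {}^c \in \binom{[n]}{d-1}$), one finds that the relevant component is, up to the fixed identification $\ext^n V^\vee \cong S$, precisely the expression
\[
\sum_{i \in J \smallsetminus B} w_{J-i}\, w_{B+i},
\]
with $A := J \in \binom{[n]}{d+1}$. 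But this is exactly the left-hand side of the tropical Plücker relation attached to the pair $(A,B)$ in the Definition preceding Proposition~\ref{prop:wedgekerTLS}, and the condition that it satisfy the bend relations is exactly the assertion that this sum is unchanged upon deleting any one of its terms — which is precisely the tropical Plücker relation. Hence (1) $\Leftrightarrow$ (2), since as $(A,B)$ range over $\binom{[n]}{d+1}\times\binom{[n]}{d-1}$ we recover all the defining bend/hyperplane conditions for $w$ to be a tropical Plücker vector, and conversely every component of $G$ restricted to a spanning $\alpha_J$ is of this shape (components with $J \smallsetminus B$ having fewer than two elements impose no condition, matching the convention that the relation is vacuous there).

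For (1) $\Leftrightarrow$ (3) the cleanest route is to invoke the Hodge star: the discussion in \S\ref{sec:duals} records that $\alpha(w)_J = \beta(\star w)_{J^c}$ and $\beta(w)_K = \alpha(\star w)_{K^c}$, and that $w$ satisfies the tropical Plücker relations if and only if $\star w$ does. Under $\star$, the map $F$ for $w$ becomes (up to the identification $\ext V \cong \ext V^\vee$) the map $G$ for $\star w$ and vice versa, so the statement ``$\im(G^\vee) \subset \tropker(F)$ for $w$'' is carried to ``$\im(F^\vee) \subset \tropker(G)$ for $\star w$,'' which by the already-established equivalence holds iff $\star w$ — equivalently $w$ — is a tropical Plücker vector. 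Alternatively one can repeat the computation of the first two steps with the roles of $w$ and $\star w$ interchanged, which is entirely parallel. The main obstacle I anticipate is purely bookkeeping: getting the index sets and the complementation in the Hodge star exactly right so that the component of $\alpha_J \wedge \star w$ matches the tropical Plücker polynomial with the correct $A$ and $B$, and correctly handling the degenerate components that impose no condition; there is no conceptual difficulty, but the sign-free combinatorics of which monomials survive must be tracked carefully, and one should double-check the edge cases $d=0$, $d=n$, and small $n-d$.
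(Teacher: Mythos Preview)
Your proposal is correct and follows essentially the same route as the paper: both arguments unwind $\im(F^\vee)\subset\tropker(G)$ to the statement that each $\alpha_J$ lies in each $\beta_K^\perp$, and then identify the resulting pairing $\sum_{i\in J\smallsetminus K} w_{J-i}w_{K+i}$ with the tropical Pl\"ucker expression for $(A,B)=(J,K)$. The one point where you diverge is in deducing $(2)\Leftrightarrow(3)$: you invoke the Hodge star together with the separate fact that $w$ is Pl\"ucker iff $\star w$ is, whereas the paper simply observes the elementary symmetry $\alpha_J\in\beta_K^\perp \Leftrightarrow \beta_K\in\alpha_J^\perp$ (both conditions are the bend relations of the same bilinear pairing), which gives $(2)\Leftrightarrow(3)$ in one line without appealing to matroid duality. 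Your alternative of ``repeating the computation with roles interchanged'' is fine too, but the symmetry-of-$\perp$ shortcut is worth noting.
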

\begin{proof}
Since $\tropker(G) = \bigcap_K \beta_K^\perp$ and $\im(F^\vee)$ is spanned by the vectors $\alpha_J$, we have $\im(F^\vee)\subset \tropker(G)$ if and only if $\alpha_J \in \beta_K^\perp$ for all $J$ and $K$, which is equivalent to $\beta_K \in \alpha_J^\perp$ for all $J,K$.   These conditions are precisely the tropical Pl\"ucker relations.
\end{proof}

By definition, regardless of whether or not $w$ is a tropical Pl\"ucker vector, the vectors $\alpha_J$ give the equations that describe $L_w = \tropker (F)$, which is to say that $\tropker(F) = \cap_J \alpha_J^\perp$.  A fundamental result about tropical linear spaces (first proved in \cite[Theorem 3.8]{Murota-circuits}, and later given a different proof in \cite[Proposition 4.1.9]{Frenk}) is that when $w$ is a tropical Pl\"ucker vector then the inclusions of Proposition \ref{prop:inclusions} above are actually equalities; i.e.,

\begin{theorem}\label{thm:spanned-by-cocircuits}
Assume $S$ is totally ordered. If $w$ is a tropical Pl\"ucker vector then the valuated cocircuit vectors $\beta_K$ span the associated tropical linear space $L_w$, and so $\im(F^\vee) = \tropker (G)$ and $\im(G^\vee) = \tropker (F)$.
\end{theorem}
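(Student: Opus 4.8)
The plan is to establish the geometric equality $L_w=\mathrm{span}\{\beta_K\}$ (equivalently $\im(G^\vee)=\tropker(F)$) and then to deduce $\im(F^\vee)=\tropker(G)$ from it by Hodge duality. Throughout one uses that $S$ is totally ordered, so $+$ is $\max$; I write $a\le b$ for $a+b=b$. One inclusion is free: since $\im(G^\vee)=\mathrm{span}\{\beta_K\}$, Proposition~\ref{prop:inclusions} gives $\mathrm{span}\{\beta_K\}=\im(G^\vee)\subseteq\tropker(F)=L_w$. For the reverse inclusion I would argue by a ``patching'' principle: it suffices to show that for every $v\in L_w$ and every $i\in\supp(v)$ there is a $(d-1)$-set $K$ with $i\notin K$, $w_{K+i}\neq 0$, such that the rescaled cocircuit $s_i\beta_K$, with $s_i:=v_iw_{K+i}^{-1}$, satisfies $s_i\beta_K\le v$ coordinatewise. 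Indeed, choosing such a $K_i$ for each $i\in\supp(v)$ and putting $u:=\sum_{i\in\supp(v)}s_i\beta_{K_i}\in\mathrm{span}\{\beta_K\}$, one has $u\le v$ coordinatewise while the $i$-th summand already realizes $v_i$ in the $i$-th coordinate and both sides vanish off $\supp(v)$, so $u=v$.

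Unwinding $\beta_K=\sum_{j\notin K}w_{K+j}e_j$, the inequality $s_i\beta_K\le v$ breaks into: (a) $w_{K+j}=0$ whenever $v_j=0$, i.e.\ $\supp(\beta_K)\subseteq\supp(v)$; and (b) $v_iw_{K+j}\le v_jw_{K+i}$ for every $j$ with $v_j\neq 0$. So the whole problem reduces to the key lemma: given $v\in L_w$ and $i\in\supp(v)$, produce a $(d-1)$-set $K$ with $i\notin K$, $w_{K+i}\neq 0$, satisfying (a) and (b). I would prove this in two steps. The \emph{combinatorial} step gives (a): over a totally ordered $S$, the bend relations of the circuit forms $\alpha_J$ that $v$ satisfies say precisely that $|C\cap\supp(v)|\neq 1$ for every circuit $C$ of the matroid $M$ underlying $w$ (every circuit occurs as the circuit $C_J$ of a suitable $(d+1)$-set $J$), so $\supp(v)^c$ is a flat $F$ of $M$; as $i\notin F$ one extends $F$ to a hyperplane $H$ with $i\notin H$, and then any basis $K$ of $M|H$ has $\overline K=H$, so $\supp(\beta_K)=H^c\subseteq F^c=\supp(v)$ and $i\in\supp(\beta_K)$, which in particular forces $w_{K+i}\neq 0$.

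The \emph{numerical} step gives (b), and this is where I expect the real work to be. Among all $(d-1)$-sets $K$ satisfying (a) with $i\notin K$ and $w_{K+i}\neq 0$, pick one that is extremal for a suitable objective — e.g.\ $w_{K+i}$ alone when that suffices, or a monomial in $w_{K+i}$ and the nonvanishing $v_l$ in general — and suppose (b) failed at some $j$, so $v_iw_{K+j}>v_jw_{K+i}$. Applying the valuated (three-term) exchange relation for $w$ at the $(d+1)$-set $J=K+i+j$, together with the bend relation that $v$ satisfies at that same $J$, should produce an element $l\in K$ that may be swapped for $j$ to give a $(d-1)$-set $K'$ still satisfying (a) but strictly improving the objective — a contradiction. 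This extremal-exchange argument is exactly the classical fact that valuated cocircuits span the tropical linear space \cite[Theorem~3.8]{Murota-circuits} (reproved in \cite[Proposition~4.1.9]{Frenk}); the delicate points are the bookkeeping around the zero coordinates of $v$ and arranging the objective so that the exchanged set $K'$ still meets (a). One can lighten this by first reducing to $\supp(v)=[n]$ through an induction on the number of vanishing coordinates — peeling off coloops, where the relevant cocircuits degenerate to scalar multiples of a single basis vector — so that in the remaining case the unambiguous objective $\prod_l v_l\cdot w_{K+i}^{-1}$ can be used.

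Finally, once $L_w=\mathrm{span}\{\beta_K\}$ is proved for all tropical Pl\"ucker vectors, i.e.\ $\im(G^\vee)=\tropker(F)$, the companion equality $\im(F^\vee)=\tropker(G)$ follows by applying this to $\star w\in\ext^{n-d}V^\vee$, which is again a tropical Pl\"ucker vector: its valuated cocircuit vectors are precisely the $\alpha(w)_J$ (via $\alpha(w)_J=\beta(\star w)_{J^c}$), so $\mathrm{span}\{\alpha(w)_J\}=L_{\star w}=\tropker(-\wedge\star w)=\tropker(G)$, while $\mathrm{span}\{\alpha(w)_J\}=\im(F^\vee)$.
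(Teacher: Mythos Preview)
The paper does not give its own proof of this theorem: immediately before the statement it says the result was ``first proved in \cite[Theorem 3.8]{Murota-circuits}, and later given a different proof in \cite[Proposition 4.1.9]{Frenk},'' and then simply records the statement. So there is no in-paper argument for you to be compared against; your proposal already goes further than the paper does.

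That said, a few comments on the sketch. The patching reduction and the combinatorial step are correct: for $v\in L_w$ the bend relations of the circuit forms force $|C\cap\supp(v)|\neq 1$ for every circuit $C$ of the underlying matroid, so $\supp(v)^c$ is a flat, and the extension of that flat to a hyperplane avoiding a fixed $i\in\supp(v)$ (using that such $i$ cannot be a loop) is a standard matroid argument. The deduction of $\im(F^\vee)=\tropker(G)$ from the spanning statement by applying the latter to $\star w$ is also correct and is exactly how the paper sets up the $\alpha/\beta$ correspondence.

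The numerical step, however, is not yet a proof. You do not commit to an objective function, and the heuristic ``should produce an element $l\in K$'' hides exactly the exchange computation that carries the argument; in particular one must check that the exchanged set $K'=K-l+j$ still satisfies condition (a), which is not automatic. Your proposed shortcut of ``peeling off coloops'' to reduce to $\supp(v)=[n]$ is not quite right either: elements of the flat $\supp(v)^c$ need not be coloops of the underlying matroid, so the induction you describe does not cover the general case. A cleaner reduction is to delete a $j\in\supp(v)^c$ and argue that $v$ lies in the tropical linear space of the deleted valuated matroid, handling separately the case where $j$ is a coloop (where indeed the relevant cocircuit is a coordinate vector). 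As you yourself note, filling this in amounts to reproving the cited Murota--Tamura/Frenk result.
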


A standard corollary of this result is the following statement.

\begin{corollary}
Let $S$ be a totally ordered semifield. Let $w \in \ext^d V$ be a tropical Pl\"ucker vector.  Then \[(L_w)^\perp = L_{\star w},\] and hence $L_w^{\perp\perp} = L_w$.
\end{corollary}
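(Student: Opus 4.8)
The plan is to derive both statements from Theorem \ref{thm:spanned-by-cocircuits}, once one records the elementary fact that the orthogonal dual of a submodule of $V$ can be computed from any generating set. Throughout, write $F = -\wedge w : V \to \ext^{d+1}V$ and $G = -\wedge \star w : V^\vee \to \ext^{n-d+1}V^\vee$. By definition $L_w = \tropker(F)$ as a submodule of $V$ and $L_{\star w} = \tropker(G)$ as a submodule of $V^\vee$; moreover (by the discussion preceding Proposition \ref{prop:inclusions}, which rests on the component-form computation of Proposition \ref{prop:wedgekerTLS}) we have $\tropker(F) = \bigcap_J \alpha_J^\perp$ and $\tropker(G) = \bigcap_K \beta_K^\perp$, where the circuit vectors $\alpha_J$ span $\im(F^\vee)\subset V^\vee$ and the cocircuit vectors $\beta_K$ span $\im(G^\vee)\subset V$.

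The first ingredient is the observation that if a submodule $L\subset V$ is generated by vectors $\{\gamma_\lambda\}$, then $L^\perp = \bigcap_\lambda \gamma_\lambda^\perp$ in $V^\vee$. One inclusion is immediate from $\gamma_\lambda\in L$; for the other, recall that $x = \sum_i c_i x_i\in V^\vee$ lies in $\gamma^\perp$ (for $\gamma = \sum_i \gamma_i e_i$) precisely when $\sum_i \gamma_i c_i = \sum_{i\ne j}\gamma_i c_i$ for every $j$. Given $x$ in every $\gamma_\lambda^\perp$ and an arbitrary $\alpha = \sum_\lambda s_\lambda \gamma_\lambda\in L$, distributing the scalars $s_\lambda$ across these equalities yields $\sum_i \alpha_i c_i = \sum_{i\ne j}\alpha_i c_i$ for every $j$, i.e.\ $x\in\alpha^\perp$. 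This is essentially Remark \ref{rem:bend}(5) and does not require $S$ to be totally ordered.

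Now I would invoke Theorem \ref{thm:spanned-by-cocircuits}: since $w$ is a tropical Pl\"ucker vector and $S$ is totally ordered, $L_w = \tropker(F) = \im(G^\vee)$ is spanned by the cocircuit vectors $\beta_K$. Combining this with the generating-set fact gives $L_w^\perp = \bigcap_K \beta_K^\perp = \tropker(G) = L_{\star w}$, which is the first assertion. For the second, Theorem \ref{thm:spanned-by-cocircuits} equally gives $L_{\star w} = \tropker(G) = \im(F^\vee)$, spanned by the $\alpha_J$, so the same argument applied with $V$ replaced by $V^\vee$ yields $(L_{\star w})^\perp = \bigcap_J \alpha_J^\perp = \tropker(F) = L_w$; hence $L_w^{\perp\perp} = (L_{\star w})^\perp = L_w$. (Alternatively, apply the first assertion to the tropical Pl\"ucker vector $\star w$ and use $\star\star = \id$.)

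I expect no real obstacle here: essentially all the substance is imported from Theorem \ref{thm:spanned-by-cocircuits}, and the remaining content is the distributivity lemma and some bookkeeping. The one point to be careful about is keeping straight which of $V$ and $V^\vee$ each $\tropker$, each $(-)^\perp$, and each image lives in, and noting that $L_{\star w} = \tropker(G)$ as a submodule of $V^\vee$ is exactly the $(V^\vee,\star w)$ instance of Proposition \ref{prop:wedgekerTLS}.
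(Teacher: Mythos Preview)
Your proof is correct and follows essentially the same route as the paper: use Theorem~\ref{thm:spanned-by-cocircuits} to see that $L_w$ is spanned by the cocircuit vectors $\beta_K$, deduce $L_w^\perp = \bigcap_K \beta_K^\perp = L_{\star w}$, and then iterate (or invoke $\star\star = \id$) for the second claim. Your explicit verification that $L^\perp$ can be computed on a generating set is a detail the paper leaves implicit in its one-line argument, but it is not a genuine departure.
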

\begin{proof}
If $L_w$ is spanned by the valuated cocircuit vectors $\beta_K$ then $L_w^\perp = \cap_K \beta_K^\perp = L_{\star w}$.  Thus $L_w^{\perp \perp} = L_{\star w}^\perp = L_{\star \star w} = L_w$.
\end{proof}

From the above facts we obtain direct formulae for passing between a tropical linear space $L_w$ and the associated quotient module $Q_w$.

\begin{corollary}
Let $S$ be a totally ordered semifield. When $w\in\ext^d V$ is a tropical Pl\"ucker vector then $L_w = Q_w^\vee$, and the module $Q_w$ can be recovered directly
from the associated tropical linear space $L_w$ by the formula 
\[
Q_w = V^\vee / \bend(L_w^\perp).
\]
\end{corollary}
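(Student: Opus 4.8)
The plan is to reduce the statement to results already established. The first assertion, $L_w = Q_w^\vee$, actually holds for \emph{any} $w$ and is immediate from the definitions: by Definition \ref{def:tropker} one has $\tropker(-\wedge w) = (V^\vee/\bend(-\wedge w))^\vee$, and the left-hand side is $L_w$ by definition while the right-hand side is $Q_w^\vee$. So the real content is the recovery formula $Q_w = V^\vee/\bend(L_w^\perp)$, and the hypotheses (tropical Pl\"ucker vector, $S$ totally ordered) are needed only there.

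For that formula I would begin by unwinding definitions. By the definition of $\bend$ of a linear map in \S\ref{sec:pre-bend}, we have $\bend(-\wedge w) = \bend(\im F^\vee)$ where $F = -\wedge w$ and $\im F^\vee \subset V^\vee$; and by the description in \S\ref{sec:duals}, the submodule $\im(F^\vee)$ is precisely the span of the valuated circuit vectors $\alpha_J = \sum_{i\in J} w_{J-i}x_i$, $J\in\binom{[n]}{d+1}$. Since $\bend$ of a submodule depends only on that submodule (not on a chosen spanning set), we get $Q_w = V^\vee/\bend(\im F^\vee)$, and it suffices to prove the equality of submodules $\im(F^\vee) = L_w^\perp$ inside $V^\vee$.

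To identify $\im(F^\vee)$ with $L_w^\perp$ I would chain together the cocircuit-spanning theorem and the duality corollary preceding the statement. Because $w$ is a tropical Pl\"ucker vector and $S$ is totally ordered, Theorem \ref{thm:spanned-by-cocircuits} gives $\im(F^\vee) = \tropker(G)$ with $G = -\wedge\star w : V^\vee \to \ext^{n-d+1}V^\vee$; applying Proposition \ref{prop:wedgekerTLS} to the tropical Pl\"ucker vector $\star w \in \ext^{n-d}(V^\vee)$ identifies this tropical kernel with $L_{\star w} \subset V^\vee$; and the corollary just above the statement yields $L_{\star w} = (L_w)^\perp$. Concatenating, $\im(F^\vee) = L_w^\perp$, hence $Q_w = V^\vee/\bend(-\wedge w) = V^\vee/\bend(\im F^\vee) = V^\vee/\bend(L_w^\perp)$, as claimed.

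I do not expect a genuine obstacle: the argument is a bookkeeping exercise assembling Theorem \ref{thm:spanned-by-cocircuits}, the circuit/cocircuit-versus-Hodge-star dictionary of \S\ref{sec:duals}, and the $(L_w)^\perp = L_{\star w}$ corollary. The only points deserving care are tracking which ambient module ($V$ or $V^\vee$) each submodule lives in when one passes to $\star w$ and applies the wedge-kernel description there, and noting explicitly that the total-ordering hypothesis enters solely through Theorem \ref{thm:spanned-by-cocircuits} (equivalently, through the fact that $L_w$ is spanned by its valuated cocircuit vectors). One should also record the elementary observation that $\bend(L)$ is a function of the submodule $L$ and not of a generating set, which is clear from the definition in \S\ref{sec:pre-bend}.
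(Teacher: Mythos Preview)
Your proposal is correct and follows essentially the same route as the paper's proof. The paper is simply terser: it records that $Q_w = V^\vee/\bend(\im F^\vee)$ by definition and then invokes Theorem~\ref{thm:spanned-by-cocircuits} in one line to conclude that $\im(F^\vee)$ spans $L_w^\perp$; your intermediate identifications $\im(F^\vee) = \tropker(G) = L_{\star w} = (L_w)^\perp$ are exactly the chain implicit in that appeal.
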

\begin{proof}
By definition, $Q_w$ is the quotient of $V^\vee$ by the bend relations of the image of $F^\vee$, where $F=-\wedge w$, and
by Theorem \ref{thm:spanned-by-cocircuits}, the image of $F^\vee$ spans $L_w^\perp$.
\end{proof}
As mentioned in Remark \ref{rem:bend-duality-conjecture}, when $d=n-1$, and conjecturally for all $d$, we also have the formula $Q_w = L_w^\vee$.

\section{Stable sums of tropical linear spaces}\label{sec:stablesum}

Speyer defined the stable intersection of tropical linear spaces in essence by tropicalizing the formula for the Pl\"ucker vector of a transverse intersection of linear subspaces.  He showed that this yields an intersection of tropical linear spaces which always yields the expected dimension, when it is non-negative, at least in the tropical torus $\mathbb{R}^n \subset \T^n$ \cite[\S3]{Speyer}.  A stable intersection with this dimension property extends to tropical varieties of arbitrary degree \cite{Sturmfels-first-steps,Mikhalkin-ICM}, and by using orthogonal duality of tropical linear spaces it determines a dimension-additive stable sum of tropical linear spaces, which has appeared in \cite{Fink-Rincon}.  In this section we first translate the stable sum into our tropical Grassmann setting and then explore some consequences and related constructions.

\subsection{Stable sum} 

Let $w\in \ext^d V$ and $w'\in \ext^{d'} V$ be tropical Pl\"ucker vectors such that there exists nonzero coordinates $w_I, w'_J$ with disjoint indices $I,J\subset [n]$ (so in particular, $d+d' \leq n$).  The \emph{stable sum} $L_w\oplus_{st}L_{w'}$ of the corresponding tropical linear spaces has tropical Pl\"ucker coordinates with $K^{\text{th}}$ entry 
\begin{equation}\label{eq:stablesum}
  \sum_{I\sqcup J = K} w_I w'_J,
\end{equation}
where the sum is over all partitions of $K$ into disjoint subsets $I$ and $J$ of size $d$ and $d'$. This formula has a simple description in terms of the tropical Grassmann algebra.

\begin{proposition}\label{prop:sumwedge}
For tropical Pl\"ucker vectors $w\in \ext^d V$ and $w'\in \ext^{d'} V$ as above, the stable sum $L_w\oplus_{st}L_{w'}$ has tropical Pl\"ucker vector $w\wedge w' \in \ext^{d+d'} V$.
\end{proposition}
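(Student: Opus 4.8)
The plan is to recognize that formula \eqref{eq:stablesum} is simply the coordinate expansion of the product $w\wedge w'$ in the standard basis of $\ext^{d+d'}V$, so that the proposition amounts to matching coefficients once the stable sum is known to be well-defined.

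First I would expand the product. Writing $w=\sum_{I\in\binom{[n]}{d}}w_Ie_I$ and $w'=\sum_{J\in\binom{[n]}{d'}}w'_Je_J$, distributivity of $\wedge$ over $+$ in $\ext V$ yields
\[
w\wedge w' \;=\; \sum_{I,J} w_I\,w'_J\,(e_I\wedge e_J).
\]
By the defining relations of the tropical Grassmann algebra (Definition \ref{def:TGA}), $e_I\wedge e_J=0$ whenever $I\cap J\neq\emptyset$, since then some square $e_i^2$ occurs, while $e_I\wedge e_J=e_{I\cup J}$ when $I$ and $J$ are disjoint; this is the same manipulation as in Proposition \ref{prop:minors}. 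Collecting terms, the coefficient of $e_K$ in $w\wedge w'$ is exactly $\sum_{I\sqcup J=K}w_Iw'_J$, the sum running over partitions of $K\in\binom{[n]}{d+d'}$ into a $d$-set and a $d'$-set --- which is precisely the $K^{\text{th}}$ entry of \eqref{eq:stablesum}.

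It then remains only to confirm that $w\wedge w'$ really is a tropical Pl\"ucker vector. The disjoint-support hypothesis supplies nonzero coordinates $w_{I_0}$, $w'_{J_0}$ with $I_0\cap J_0=\emptyset$; since $S$ is a semifield the product $w_{I_0}w'_{J_0}$ is nonzero, and as there is no subtraction the $e_{I_0\cup J_0}$-coefficient of $w\wedge w'$ is nonzero, so $w\wedge w'\neq 0$. That the coordinate vector \eqref{eq:stablesum} satisfies the tropical Pl\"ucker relations --- equivalently, that the stable sum is a tropical linear space of the expected rank $d+d'$ --- is Speyer's result on stable intersection transported through orthogonal duality (cf.\ \cite{Speyer,Fink-Rincon}); granting this, $w\wedge w'$ satisfies the tropical Pl\"ucker relations and, by the very definition of the stable sum, $L_{w\wedge w'}=L_w\oplus_{st}L_{w'}$.

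I do not expect a serious obstacle here: the only care required is combinatorial bookkeeping with the cardinalities of the index sets and the vanishing of cross terms with overlapping supports, together with reliance on the cited well-definedness of the stable sum. A fully self-contained argument would instead verify the quadratic tropical Pl\"ucker relations for $w\wedge w'$ directly from those for $w$ and $w'$; that is the genuinely more laborious route, and it is exactly what appealing to the known geometry of stable intersections lets us avoid.
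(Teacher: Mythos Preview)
Your argument is correct and matches the paper's: both simply expand $w\wedge w'$ bilinearly, observe that $e_I\wedge e_J$ vanishes unless $I$ and $J$ are disjoint, and read off that the $e_K$-coefficient is exactly the stable-sum formula \eqref{eq:stablesum}. The additional paragraph you include on why $w\wedge w'$ is itself a tropical Pl\"ucker vector is not part of the paper's proof of this proposition; the paper treats that separately in the next proposition, where it gives a direct verification of the Pl\"ucker relations valid over any idempotent semifield rather than appealing to Speyer's polytope argument.
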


\begin{proof}
Write $w = \sum w_I e_I$ and $w' = \sum w'_J e_J$.  Then 
\[
  w\wedge w' = \sum_{I,J} w_I w'_J e_I \wedge e_J.
\]
The only nonzero terms here are those for which $I$ and $J$ are disjoint.
\end{proof}
(Note that $w\wedge w' = 0$ if and only if the disjoint index condition on the pair $w,w'$ is not satisfied.)

Formula \eqref{eq:stablesum} first appeared in \cite[\S3.1]{Fink-Rincon}, and the fact that it indeed produces a tropical Pl\"ucker vector follows immediately from \cite[Proposition 3.1]{Speyer} in the case when the ground semifield $S$ is $\mathbb{T}$, or more generally, as long as $S$ is totally ordered.  Speyer's arugment is written in terms of matroid polytopes, so it does not directly generalize to arbitrary semifields.  Here we provide a different proof that works over any semifield $S$.

\begin{proposition}
If $w\in \ext^d V$ and $w'\in \ext^{d'}V$ are tropical Pl\"ucker vectors with nonzero wedge product, then $w\wedge w' \in
\ext^{d+d'} V$ is also a tropical Pl\"ucker vector.  
\end{proposition}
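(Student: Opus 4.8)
The plan is to verify directly that $w\wedge w'$ satisfies each quadratic tropical Pl\"ucker relation, using only the relations already assumed for $w$ and for $w'$ together with the elementary algebra of $\ext V$. Write $u=w\wedge w'$; as in the proof of Proposition~\ref{prop:sumwedge} its coordinates are $u_K=\sum_{I\sqcup J=K}w_I w'_J$, summed over disjoint $I,J\subset[n]$ with $|I|=d$ and $|J|=d'$. Fix $A\in\binom{[n]}{d+d'+1}$ and $B\in\binom{[n]}{d+d'-1}$. The Pl\"ucker relation for $(A,B)$ asserts that for every $p_0\in A\setminus B$ the term $u_{A-p_0}u_{B+p_0}$ can be deleted from $\sum_{p\in A\setminus B}u_{A-p}u_{B+p}$ without changing the value; expanding $u_{A-p_0}u_{B+p_0}$ into monomials $w_I w_{I'}w'_J w'_{J'}$ (with $I\sqcup J=A-p_0$ and $I'\sqcup J'=B+p_0$), it suffices to show that each such monomial $\theta$ is absorbed into the retained summands $\sum_{p\neq p_0}u_{A-p}u_{B+p}$.

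Since $p_0\in I'\sqcup J'$ and $p_0\notin B$, either $p_0\in I'$ or $p_0\in J'$, and by the evident symmetry $w\leftrightarrow w'$ we may assume $p_0\in I'$, so that $B=(I'-p_0)\sqcup J'$. Apply the quadratic tropical Pl\"ucker relation for $w$ to the sets $I+p_0\in\binom{[n]}{d+1}$ and $I'-p_0\in\binom{[n]}{d-1}$: its $p_0$-term equals $w_I w_{I'}$, and deleting it — then multiplying through by the scalar $w'_J w'_{J'}$, which is harmless whether or not it vanishes — shows that $\theta$ is absorbed into the monomials $w_{(I-k)+p_0}\,w_{(I'-p_0)+k}\,w'_J w'_{J'}$ as $k$ ranges over $I\setminus I'$. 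When $k\notin J'$ one checks that this monomial is precisely the term of $u_{A-k}u_{B+k}$ indexed by the partitions $(I-k)+p_0\,\sqcup\,J=A-k$ and $(I'-p_0)+k\,\sqcup\,J'=B+k$, hence a retained summand. When $k\in J'$ the element $k$ lies simultaneously in one of the $w$-indices and one of the $w'$-indices, and we push it out of the $w'$-index by the same device on the $w'$-side, using the Pl\"ucker relation for $w'$ with the sets $J+k$ and $J'-k$; this yields monomials with an exchanged $w'$-pair, which are again retained terms unless the new exchange element collides with the current $w$-index, in which case we exchange once more on the $w$-side, and so on, alternating.

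The crux — and the step I expect to be the main obstacle — is to show that this alternating cascade of single-element exchanges terminates and ultimately deposits $\theta$ into genuine summands $u_{A-p}u_{B+p}$ with $p\neq p_0$. Every index that ever occurs is contained in $A\cup B$, so only finitely many monomials are in play; what is needed is a potential (morally, a weighted count of the overlap between the $w$-indices and the $w'$-indices of the current monomial, perhaps refined to prevent the overlap created by one exchange from being recreated later) that forces the process to halt, or equivalently a sufficiently careful — e.g.\ minimal — choice of exchange element at each stage. Granting termination, transitivity of the congruence generated by the Pl\"ucker bend relations of $w$ and of $w'$ delivers the Pl\"ucker bend relation for $w\wedge w'$; since no total ordering of $S$ is used, this proves the statement over an arbitrary idempotent semifield. (An alternative organization factors the stable sum as the direct sum $w\otimes w'\in\ext^{d+d'}(V\oplus V)$, which is immediately a tropical Pl\"ucker vector because on the doubled ground set each Pl\"ucker relation decouples into one for $w$ rescaled by a coordinate of $w'$ or vice versa, followed by the fold $V\oplus V\twoheadrightarrow V$; this concentrates the entire difficulty into the claim that merging two ground-set elements preserves the valuated exchange axiom — the same combinatorial core.)
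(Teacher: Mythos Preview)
Your opening move matches the paper's: expand $(w\wedge w')_{A-p}(w\wedge w')_{B+p}$ into monomials $w_{I}w_{I'}w'_{J}w'_{J'}$ and observe that the distinguished index $p_0$ lies either in the $w$-part or the $w'$-part of $B+p_0$, so that the corresponding Pl\"ucker relation for $w$ or for $w'$ applies. The divergence is organizational, and it is exactly where your gap appears.

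You proceed monomial by monomial: absorb $\theta$ via one Pl\"ucker exchange, and when the exchanged element $k$ collides with the other factor's index, exchange again on that side, alternating. As you yourself flag, termination of this cascade is the crux, and you do not establish it. The sketch of a potential function is not a proof: the overlap between the current $w$-indices and $w'$-indices need not decrease monotonically under alternating exchanges, and invoking a ``minimal'' exchange element would require a total order on $S$, which the proposition is meant to avoid. This is a genuine gap.

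The paper sidesteps the cascade entirely by reindexing globally rather than tracking individual monomials. It rewrites the whole double sum as
\[
\sum_{\substack{E\sqcup C' = A \\ F\sqcup D' = B}} w'_{C'} w'_{D'}\Bigl(\sum_{i \in E\smallsetminus F} w_{E-i} w_{F+i}\Bigr)
\;+\;
\sum_{\substack{C \sqcup G = A \\ D \sqcup H = B}} w_{C} w_{D}\Bigl(\sum_{i \in G\smallsetminus H} w'_{G-i} w'_{H+i}\Bigr),
\]
with the outer sums over partitions of $A$ and $B$ of the appropriate sizes. Each inner bracket is now itself a Pl\"ucker expression for $w$ or for $w'$, so a \emph{single} application of the hypothesis lets one drop the $i=i_0$ term from every bracket at once; the paper then notes that the totality of dropped terms is precisely the $i_0$-summand of the original expression. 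The point is that packaging the sum this way makes all the needed exchanges happen in parallel, so there is no iterated process whose termination must be argued.

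Your parenthetical alternative (pass to $V\oplus V$ and then fold) is a legitimate reformulation, but as you note it merely relocates the same combinatorial core into the claim that identifying two ground-set elements preserves the valuated exchange axiom; that is not evidently easier.
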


\begin{proof}
By definition,  $w\wedge w'$ is a tropical Pl\"ucker vector if for any $A\subset \binom{[n]}{d+d'+1}$ and $B\subset \binom{[n]}{d+d'-1}$, the bend relations of the expression
\[
\sum_{i\in A\smallsetminus B} (w\wedge w')_{A-i} (w\wedge w')_{B+i}
\]
hold, which is to say that any single term in the summation can be omitted without changing the value of the expression (as an element of $S$).  The above formula can be expanded out in terms of the components of $w$ and $w'$ as
\begin{equation}\label{eq:wedge-expansion1}
\sum_{i\in A\smallsetminus B} \:\: \sum_{\substack{C \sqcup C' = A-i \\ D \sqcup D' = B+i}} (w_C w_D) (w'_{C'} w'_{D'}).
\end{equation}
This can be rewritten as
\begin{align*}
\sum_{\substack{E\sqcup C' = A \\ F\sqcup D' = B}} (w'_{C'} w'_{D'})\left(\sum_{i \in E\smallsetminus F} w_{E-i} w_{F+i} \right)\\
+ \sum_{\substack{C \sqcup G = A \\ D \sqcup H = B}} (w_C w_D) \left(\sum_{i \in G\smallsetminus H} w'_{G-i} w'_{H+i} \right),
\end{align*}
where the terms on the first line correspond to the terms in \eqref{eq:wedge-expansion1} where
$i\in D$ (by setting $E = C+i$ and $F=D-i$), and the second line corresponds to the terms where $i
\in D'$ (by setting $G=C'+i$ and $H=D'-i$).  Since $w$ and $w'$ are tropical Pl\"ucker vectors, for
any $i_0 \in A\smallsetminus B$, omitting all of the $i=i_0$ terms leaves the value of this expression
unchanged.  But the sum of these omitted terms is precisely the $i=i_0$ term in the expression
\eqref{eq:wedge-expansion1}, and so it follows that $w\wedge w'$ is indeed a tropical Pl\"ucker vector.
\end{proof}

\subsection{Stiefel spaces and transversal matroids}

Fink and Rinc\'on define a \emph{Stiefel} tropical linear space to be any tropical linear space associated to the vector of maximal minors of a tropical matrix, assuming not all these minors vanish \cite{Fink-Rincon}.  They observe that such a vector indeed satisfies the tropical Pl\"ucker relations by choosing a lift of the tropical matrix to a valued field and then noting that if this lift is sufficiently generic then valuation commutes with taking minors.  This argument applies to any idempotent semifield $S$ for which there exists a field $k$ and surjective valuation $\nu : k \twoheadrightarrow S$ with sufficiently large fibers.  The following result characterizes the tropical Pl\"ucker vectors arising this way in terms of their tensor decomposition in the Grassmann algebra.  We call an element $w\in \ext^d V$ \emph{totally decomposable} if it factors into a wedge product of $d$ elements of $V$.

\begin{proposition}\label{prop:totaldecomp}
An element of $\ext^d V$ is totally decomposable if and only if its coordinates in the standard basis are the maximal minors of a $d\times n$ matrix.  Thus, the locus of Stiefel tropical linear spaces is in natural bijection with the locus of totally decomposable tensors in $\PP\left(\ext^dV\right)$.
\end{proposition}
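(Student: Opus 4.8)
The plan is to extract the first equivalence directly from Proposition \ref{prop:minors} and then to deduce the bijection statement formally. First I would unwind the definition: an element $w\in\ext^d V$ is totally decomposable exactly when $w = v_1\wedge\cdots\wedge v_d$ for some $v_1,\dots,v_d\in V$. Writing $v_j = \sum_{i=1}^n a_{ij}e_i$, Proposition \ref{prop:minors} identifies the $e_I$-coordinate of $w$ with the $I$-minor (the permanent of the $d\times d$ submatrix on rows $I$) of the $n\times d$ matrix $(a_{ij})$. Since the permanent is invariant under transpose, these are precisely the maximal minors of the $d\times n$ matrix $(a_{ij})^{T}$, which proves one direction. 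Conversely, given a $d\times n$ matrix $B=(b_{jk})$ whose maximal minors are the coordinates $(w_I)$ of $w$, I would set $v_j := \sum_{k=1}^n b_{jk}e_k\in V$ (the $j$-th row of $B$, viewed in $V$) and invoke Proposition \ref{prop:minors} again, now for the transposed matrix $(b_{jk})^{T} = (a_{ij})$: the $e_I$-coefficient of $v_1\wedge\cdots\wedge v_d$ is the $I$-minor of $(a_{ij})$, equal to the $I$-th maximal minor of $B$, hence equal to $w_I$. Thus $w = v_1\wedge\cdots\wedge v_d$ is totally decomposable, establishing the first sentence of the proposition.

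For the bijection statement, recall that Fink--Rinc\'on define a Stiefel tropical linear space to be $L_w$ for $w$ the vector of maximal minors of a matrix over $S$, subject only to $w\neq 0$. By the first part, such vectors $w$ are exactly the nonzero totally decomposable tensors in $\ext^d V$, and since $L_w$ depends only on the class of $w$ in $\PP(\ext^d V)$ (as noted after Proposition \ref{prop:wedgekerTLS}), the assignment $w\mapsto L_w$ descends to a well-defined surjection from the totally decomposable locus of $\PP(\ext^d V)$ onto the set of Stiefel tropical linear spaces. For injectivity I would appeal to the standard fact, recalled in the introduction, that a tropical linear space determines its tropical Pl\"ucker vector up to a global scalar (equivalently, a valuated matroid is recovered from its tropical linear space); hence $L_w = L_{w'}$ with $w,w'$ totally decomposable forces $[w]=[w']$ in $\PP(\ext^d V)$.

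I do not expect a serious obstacle here: the mathematical content is carried entirely by Proposition \ref{prop:minors}, and the only point requiring genuine care is the bookkeeping between an $n\times d$ matrix of coefficients and its $d\times n$ transpose, together with transpose-invariance of the permanent. The one mildly non-formal ingredient is the injectivity of $w\mapsto L_w$ on projectivized tropical Pl\"ucker vectors, which I would cite from the reconstruction of a valuated matroid from its tropical linear space rather than reprove.
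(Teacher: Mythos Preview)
Your proposal is correct and matches the paper's approach: the paper's proof is literally ``the first statement follows immediately from Proposition \ref{prop:minors}, and the second statement follows immediately from the first,'' and your first paragraph is just this spelled out carefully (the transpose bookkeeping and permanent symmetry are the only content).

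For the second sentence you do a bit more than the paper. You read ``locus of Stiefel tropical linear spaces'' as a set of subspaces of $V$ and therefore argue separately for injectivity of $[w]\mapsto L_w$, citing the reconstruction of a valuated matroid from its tropical linear space. The paper evidently reads the Stiefel locus as already sitting inside $\PP(\ext^d V)$ (the Pl\"ucker vectors of Stiefel spaces), so that the bijection is the identity on this subset and no injectivity argument is needed. Your reading is also reasonable, but note that the reconstruction result you invoke is only established in the paper (and in the cited literature) under the hypothesis that $S$ is totally ordered, whereas Proposition \ref{prop:totaldecomp} is stated for arbitrary idempotent $S$; so if you keep your interpretation, you should either restrict to totally ordered $S$ for the bijection or adopt the paper's reading to avoid the extra hypothesis.
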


\begin{proof}
The first statement follows immediately from Proposition \ref{prop:minors}, and the second statement follows immediately from the first.
\end{proof}

\begin{remark}
This result is in stark contrast to the classical case where every Pl\"ucker vector is totally decomposable.  Indeed, over a field $k$, the Pl\"ucker vector of a linear subspace is the wedge product of the vectors in any basis, and the bijection between linear subspaces and totally decomposable tensors yields the Pl\"ucker embedding $\Gr(d,n) \hookrightarrow \PP\left(\ext^d k^n\right)$ of the Grassmannian.  It would be interesting to study tensor factorization of tropical Pl\"ucker vectors with the goal of understanding when a tropical linear space decomposes as a non-trivial stable sum.
\end{remark}

Since the tropical Pl\"ucker relations are trivial in rank one, and any wedge product of tropical Pl\"ucker vectors is a tropical Pl\"ucker vector if it is nonzero, Proposition \ref{prop:totaldecomp} gives an alternate proof, extending to arbitrary $S$ and avoiding the use of valuations, of the statement that the maximal minors of a tropical matrix satisfy the tropical Pl\"ucker relations.  By combining with Proposition \ref{prop:sumwedge}, it also provides an alternate proof of \cite[Proposition 3.4]{Fink-Rincon}, which says that the condition of being Stiefel is equivalent to being a stable sum of 1-dimensional spaces.  Specializing to the case $S=\mathbb{B}$, where the stable sum becomes the usual matroid union, we see from this discussion and \cite[Proposition 12.3.7]{Oxley} that a matroid is transversal if and only if, when viewed as a tensor, it is totally decomposable.

\subsection{Elongation}
Murota in \cite[\S2]{Murota-valuated} extends the notion of matroid elongation to valuated matroids as follows.  Given integers $1 \le d \le d' \le n$ and a rank $d$ tropical Pl\"ucker vector $(w_I) \in S^{\binom{[n]}{d}}$, the \emph{elongation} is the rank $d'$ tropical Pl\"ucker vector defined by $w'_J = \sum_{I \subset J}w_I$.  That this formula yields a vector satisfying the tropical Pl\"ucker relations is \cite[Theorem 2.1(2)]{Murota-valuated}.
\begin{proposition}
The elongation of the tropical Pl\"ucker vector $w = \sum w_I e_I \in \ext^d V$ is \[w\wedge\left(\sum_{|K| = d'-d}e_K\right) \in \ext^{d'}V.\]
\end{proposition}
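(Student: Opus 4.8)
The plan is to prove this by the same direct expansion used for Proposition~\ref{prop:sumwedge}, since the stated formula simply exhibits the elongation of $w$ as the stable sum of $L_w$ with the uniform matroid of rank $d'-d$ on $[n]$. Writing $w = \sum_{I \in \binom{[n]}{d}} w_I e_I$ and $r = d'-d$, I would first expand
\[
w \wedge \left( \sum_{|K| = r} e_K \right) \;=\; \sum_{I \in \binom{[n]}{d}} \;\sum_{K \in \binom{[n]}{r}} w_I \, e_I \wedge e_K ,
\]
and then use that $e_I \wedge e_K = 0$ whenever $I \cap K \neq \emptyset$ while $e_I \wedge e_K = e_{I \cup K}$ when $I$ and $K$ are disjoint. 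Collecting the surviving terms according to $J := I \sqcup K \in \binom{[n]}{d'}$ --- which is legitimate because the $e_J$ form a basis of $\ext^{d'}V$ --- the coefficient of $e_J$ becomes $\sum_{I \subset J,\; |I| = d} w_I$. This is precisely Murota's elongation coordinate $w'_J = \sum_{I \subset J} w_I$, so the identity follows.

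Beyond establishing the formula, I would note that this gives a short alternate proof, valid over an arbitrary idempotent semifield $S$ and bypassing the polytope argument of \cite[Theorem~2.1(2)]{Murota-valuated}, that the elongation is again a tropical Pl\"ucker vector: the element $\sum_{|K| = r} e_K$ is itself a tropical Pl\"ucker vector (its tropical Pl\"ucker relations hold trivially, since with all coordinates equal to $1_S$ and at least two monomials on each side, idempotency makes both sides equal to $1_S$), and a nonzero wedge product of tropical Pl\"ucker vectors is again a tropical Pl\"ucker vector, as shown earlier in \S\ref{sec:stablesum}. Nonvanishing here is immediate: if $w_I \neq 0$ for some $I$ with $|I| = d \le d'$, then since $r = d'-d \le n-d$ there is an $r$-subset $K$ disjoint from $I$, so the coefficient of $e_{I \sqcup K}$ in the product is nonzero.

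This argument has essentially no obstacle --- it is formally identical to the expansion in the proof of Proposition~\ref{prop:sumwedge}. The only points needing a moment's care are the bookkeeping in the regrouping step (each $d'$-set $J$ is hit once for each splitting $J = I \sqcup K$ with $|I| = d$, and these contributions add in $S$) and, for the remark, verifying that $\sum_{|K|=r} e_K$ satisfies the tropical Pl\"ucker relations over a general $S$ and that the wedge product is nonzero.
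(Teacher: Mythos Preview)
Your proposal is correct and follows exactly the same approach as the paper: the paper's proof is the one-line observation that $e_I \wedge e_K = 0$ if and only if $I \cap K \neq \varnothing$, which is precisely the fact you expand in detail. Your additional paragraph about the elongation being a tropical Pl\"ucker vector (because $\sum_{|K|=r} e_K$ is a tropical Pl\"ucker vector and wedge products of such are again tropical Pl\"ucker vectors) is also exactly the content of the paper's remark immediately following the proposition.
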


\begin{proof}
This follows immediately from the fact that $e_I\wedge e_K =0$ if and only if $I\cap K \ne \varnothing$.
\end{proof}

\begin{remark} 
This gives a less \textit{ad hoc} proof that the tropical Pl\"ucker relations are satisfied for elongation, since $\sum e_K$ is clearly a tropical Pl\"ucker vector, in addition to the geometric interpretation furnished by Proposition \ref{prop:sumwedge}: the elongation of a tropical linear space is its stable sum with the tropical linear space associated to the uniform matroid. 
\end{remark}


\section{Top exterior powers}\label{sec:topwedge}

In this section we will show how the tropical Grassmann algebra provides a novel algebraic reformulation of the tropical
Pl\"ucker relations.  This also provides a direct way of recovering the tropical Pl\"ucker vector
from its associated tropical linear space.

\subsection{A reformulation of the tropical Pl\"ucker relations}

Recall that in \S\ref{sec:Qw} we have defined, for any $w = \sum w_Ie_I \in \ext^d V$, a quotient module $Q_w := V^\vee /\bend(-\wedge w)$.

\begin{theorem}\label{thm:main}
A nonzero $w\in \ext^d V$ satisfies the
tropical Pl\"ucker relations if and only if the module $\ext^d Q_w$ is free of rank one.
\end{theorem}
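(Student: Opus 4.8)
The plan is to make $\ext^d Q_w$ explicit via Lemma~\ref{lem:quot} and then analyze the resulting congruence. The congruence kernel of $V^\vee\twoheadrightarrow Q_w$ is $\bend(-\wedge w)$, which (since congruences are closed under addition and scaling of pairs) is generated by the bend relations of the circuit forms $\alpha_J=\sum_{i\in J}w_{J-i}x_i$, $J\in\binom{[n]}{d+1}$. Wedging these relations with the square-free monomials $x_K$, $K\in\binom{[n]}{d-1}$, and using $x_i\wedge x_K=0$ for $i\in K$, Lemma~\ref{lem:quot} shows that $\ext^d Q_w$ is the quotient of $\ext^d V^\vee$ by the bend relations of the forms
\[
\gamma_{A,B}:=\sum_{i\in A\setminus B}w_{A-i}\,x_{B+i}\in\ext^d V^\vee,\qquad A\in\binom{[n]}{d+1},\ B\in\binom{[n]}{d-1}.
\]
These are exactly the forms whose bend loci define the tropical Pl\"ucker relations, and since $\langle w,\gamma_{A,B}\rangle=\sum_{i\in A\setminus B}w_{A-i}w_{B+i}$, the basic observation is: \emph{$w$ is a tropical Pl\"ucker vector if and only if $w\in\bigcap_{A,B}\gamma_{A,B}^\perp$, equivalently if and only if the functional $\langle w,-\rangle:\ext^d V^\vee\to S$ descends to a (necessarily surjective) homomorphism $\epsilon:\ext^d Q_w\to S$}. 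Dualizing, $(\ext^d Q_w)^\vee=\bigcap_{A,B}\gamma_{A,B}^\perp\subseteq\ext^d V$ for every nonzero $w$.

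For the forward implication, assume $w$ is a tropical Pl\"ucker vector, so that $\epsilon$ exists. Fix $M_0\in\supp(w)$; I would prove that $[x_N]=(w_N/w_{M_0})[x_{M_0}]$ in $\ext^d Q_w$ for every $N\in\binom{[n]}{d}$ (reading $w_N/w_{M_0}=0$ when $w_N=0$), by induction on the Johnson-graph distance $\delta(N)$ from $N$ to $\supp(w)$. One uses here the standard facts that $\supp(w)$ is the set of bases of a matroid with connected basis-exchange graph, and that $\delta(N)=d-\mathrm{rk}(N)$ for $\mathrm{rk}$ the rank function of that matroid. When $\delta(N)=0$ the two-term relation $\gamma_{M\cup N,\,M\cap N}$ reads $w_N x_M\sim w_M x_N$ for adjacent bases $M,N$, and chaining along the exchange graph to $M_0$ settles this case. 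When $\delta:=\delta(N)\ge 1$ (so $w_N=0$ and the goal is $[x_N]=0$), pick $M'\in\supp(w)$ with $d(N,M')=\delta$ and $i_0\in N\setminus M'$, and set $A:=M'+i_0$, $B:=N-i_0$; a direct computation gives
\[
\gamma_{A,B}=w_{M'}\,x_N+\sum_{i\in M'\setminus N}w_{(M'-i)+i_0}\,x_{(N-i_0)+i}.
\]
Each set $(N-i_0)+i$ lies at distance $\delta-1$ from $M'$, hence strictly closer to $\supp(w)$ than $N$, and it has matroid rank at most $d-\delta+1$, so it lies outside $\supp(w)$ once $\delta\ge 2$, while for $\delta=1$ the unique extra term has coefficient $w_{(M'-i)+i_0}=w_N=0$. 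In every case the inductive hypothesis makes all terms other than $w_{M'}x_N$ reduce to $0$ in $\ext^d Q_w$, so the bend relation deleting the $x_N$-coordinate of $\gamma_{A,B}$ forces $w_{M'}[x_N]=0$, hence $[x_N]=0$ since $w_{M'}$ is a unit. Granting the claim, $\ext^d Q_w$ is generated by $[x_{M_0}]$, so $S\twoheadrightarrow\ext^d Q_w\xrightarrow{\epsilon}S$ is multiplication by the unit $w_{M_0}$; both maps are therefore isomorphisms and $\ext^d Q_w$ is free of rank one. For the ``moreover,'' dualizing $\epsilon$ and using $\epsilon([x_M])=w_M$ shows that the image of $(\ext^d Q_w)^\vee$ in $\ext^d V$ is $S\cdot w$, i.e.\ the line $(\ext^d Q_w)^\vee$ is $[w]\in\PP(\ext^d V)$, which by Proposition~\ref{prop:wedgekerTLS} is the Pl\"ucker vector of $L_w=\tropker(-\wedge w)$.

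For the converse, suppose $\ext^d Q_w$ is free of rank one; let $q:\ext^d V^\vee\twoheadrightarrow\ext^d Q_w\cong S$ be the quotient, set $c_N:=q(x_N)$ (not all zero) and $v:=\sum_N c_N e_N\in\ext^d V$. Dualizing $q$ identifies $(\ext^d Q_w)^\vee$ with $S\cdot v$ and gives $v\in\bigcap_{A,B}\gamma_{A,B}^\perp$. Since this intersection equals $(\ext^d Q_w)^\vee=S\cdot v$, and since ``$w$ lies in it'' is precisely the tropical Pl\"ucker condition, it suffices to prove $w\in S\cdot v$, i.e.\ that $w$ and $v$ are proportional. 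Applying $q$ to the two-term relations $w_N x_M\sim w_M x_N$ yields $w_N c_M=w_M c_N$ for adjacent $M,N$, which already forces $c_N/w_N$ to be constant along each connected component of $\supp(w)$ and $c_N=0$ for $N\notin\supp(w)$ Johnson-adjacent to $\supp(w)$. The remaining step---parallel to, and using the same matroid-distance input as, the hard case of the forward direction---is to upgrade these local consequences, using the longer relations $\gamma_{A,B}$ together with the rank-one hypothesis itself (which collapses the extra independent directions that a disconnected $\supp(w)$, or coordinates supported far from $\supp(w)$, would contribute), to the conclusion that $c_N=0$ for \emph{all} $N\notin\supp(w)$ and that $c_N/w_N$ is \emph{globally} constant on $\supp(w)$; then $v=s\,w$ for a unit $s$, so $w\in S\cdot v$ and $w$ is a tropical Pl\"ucker vector.

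I expect the main obstacle to be precisely this point in both directions: the easy two-term relations pin down the support coordinates but say nothing about coordinates $N\notin\supp(w)$ that fail to be Johnson-adjacent to any basis, and reaching them requires feeding a genuine matroid fact ($\delta(N)=d-\mathrm{rk}(N)$) into the choice of longer relations $\gamma_{A,B}$ and running the corank induction above.
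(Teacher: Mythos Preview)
Your overall architecture matches the paper's closely: you derive the same presentation of $\ext^d Q_w$ as the quotient of $\ext^d V^\vee$ by the bend relations of the forms $\gamma_{A,B}$, you isolate the same two-term relations $w_N x_M\sim w_M x_N$, and you run the same induction on Johnson distance to $\supp(w)$ to kill $[x_N]$ for $N\notin\supp(w)$ (this is the content of the paper's Lemma~\ref{lem:vanishing}). One genuine improvement: your observation that $w$ being Pl\"ucker is equivalent to the functional $\langle w,-\rangle$ descending to $\epsilon:\ext^d Q_w\to S$, and that this $\epsilon$ then splits the surjection $S\twoheadrightarrow\ext^d Q_w$, is a cleaner route to freeness than the paper's. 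The paper instead checks by hand that when each $\gamma_{A,B}$ is rewritten in terms of a single generator $x_{I_0}$, the scalar coefficient one obtains is a Pl\"ucker expression whose bend relations are therefore satisfied. Your use of the matroid rank formula $\delta(N)=d-\mathrm{rk}(N)$ is unnecessary, by the way: the Johnson triangle inequality already gives $\delta((N-i_0)+i)=\delta-1$, which is all the induction needs.

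The converse, however, has a real gap, and your proposed route is more circuitous than required. You reduce to showing $w\in S\cdot v$ and then plan to use ``the same matroid-distance input'' to prove $c_N/w_N$ is globally constant on $\supp(w)$. But at this point you do not know $w$ is a Pl\"ucker vector, so you cannot assume $\supp(w)$ is the basis set of a matroid or that its exchange graph is connected; you have not supplied an argument that fills this in. (It \emph{can} be done: combining the longer $\gamma_{A,B}$ with the converse half of Lemma~\ref{lem:vanishing}---which already uses the rank-one hypothesis---forces an intermediate element of $\supp(w)$ to exist between any two, so an induction on $|I\setminus I_0|$ succeeds.) The paper sidesteps the proportionality question entirely: for each $(A,B)$, if some $x_{B+j}\neq 0$ in $\ext^d Q_w$, multiply $\gamma_{A,B}$ by $w_{B+j}$ and apply the two-term relations to obtain $\bigl(\sum_{i} w_{A-i}w_{B+i}\bigr)x_{B+j}$; since $\ext^d Q_w$ is free of rank one and $x_{B+j}\neq 0$, the bend relations of the scalar coefficient must hold in $S$, which is exactly the $(A,B)$ Pl\"ucker relation. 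If every $x_{B+i}$ vanishes, the converse part of Lemma~\ref{lem:vanishing} forces every $w_{B+i}=0$ and the relation is trivial. This is a two-line argument and never requires comparing $w$ with $v$.
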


Before presenting the proof, let us first note that the theorem is only about quotients arising from wedging with a $d$-multivector.  Indeed, there exist quotients $V^\vee \to (V^\vee / \sim) = Q$ such that $\ext^d Q$ is free of rank one but $Q$ is not of the form $Q_w$ for any $w\in \ext^d V$.
\begin{example}
Consider $V^\vee$ free rank 3 with basis $x_1, x_2, x_3$ and let $Q$ be the quotient of $V$ by the relations
\[
x_1 + x_2 = x_1 + x_3 \text{ and } x_2 + x_3 = x_2.
\]
Observe that $\ext^2 Q$ is free rank 1, spanned by $x_1 x_2 = x_1 x_3$ with $x_2 x_3 = 0$. This is because in $\ext^2 Q$ we have
\[
x_1 x_2 = x_1(x_1 + x_2) = x_1(x_1 + x_3) = x_1 x_3
\]
and
\[
x_2x_3 = x_2( x_2 + x_3) = x_2^2 = 0,
\]
and by enumerating all other relations one can check that they all can be deduced from these. On the other hand, if $w \in \ext^2 V$ is nonzero then $-\wedge w : V \rightarrow \ext^3 V \cong S$ is a linear form and $L_w = Q_w^\vee$ is a tropical hyperplane, whereas $Q^\vee$ is clearly not, so we cannot have $Q = Q_w$ for any 2-multivector $w$.
\end{example}

Moving on to the proof, we begin with a couple lemmas.

\begin{lemma}\label{lem:presentation}
  For any nonzero $w\in \ext^d V$, the module $\ext^d Q_w$ is presented as the quotient of
  $\ext^d V^\vee$ by the congruence generated by the bend relations of the expressions
  \begin{equation}\label{eq:general-xw-expression}
    \sum_{i\in A\smallsetminus B} w_{A-i} x_{B+i} 
  \end{equation}
  for $A \in \binom{[n]}{d+1}$ and $B\in \binom{[n]}{d-1}$.  In particular, taking $B=A\smallsetminus\{p,q\}$
  yields the relation
  \begin{equation}\label{eq:key-rel}
    w_{A-p}x_{A-q} = w_{A-q}x_{A-p}.
  \end{equation}
\end{lemma}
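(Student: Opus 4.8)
The plan is to unwind the definition $Q_w = V^\vee/\bend(-\wedge w)$ into a concrete generating set for the defining congruence, and then push that generating set through Lemma \ref{lem:quot} and simplify. This lemma is essentially a bookkeeping exercise; the only point requiring a short argument is the passage from the \emph{a priori} infinite description of $\bend(-\wedge w)$ (bend relations of every form in a submodule) to the explicit generators indexed by $(d+1)$-subsets.

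First I would identify the congruence kernel of $V^\vee\twoheadrightarrow Q_w$. By Proposition \ref{prop:wedgekerTLS}, the $J$-component of $-\wedge w\colon V\to\ext^{d+1}V$ is the linear form $\alpha_J:=\sum_{i\in J}w_{J-i}x_i\in V^\vee$ for $J\in\binom{[n]}{d+1}$; equivalently, the dual map $(-\wedge w)^\vee\colon\ext^{d+1}V^\vee\to V^\vee$ sends $x_J\mapsto\alpha_J$, so its image is the submodule $L$ spanned by $\{\alpha_J\}_J$. Since the bend relations of $f_1+f_2$ (resp.\ of $sf$) are consequences of those of $f_1$ and $f_2$ (resp.\ of $f$), the congruence $\bend(L)$ of any submodule $L$ is generated by $\bigcup_k\bend(f_k)$ for any generating set $\{f_k\}$ of $L$. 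Hence the congruence kernel of $V^\vee\twoheadrightarrow Q_w$ is generated by $\bigcup_J\bend(\alpha_J)$, i.e.\ by the relations
\[
\sum_{i\in J}w_{J-i}x_i\ \sim\ \sum_{i\in J\smallsetminus\{j\}}w_{J-i}x_i,\qquad J\in\binom{[n]}{d+1},\ j\in J.
\]

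Next I would invoke Lemma \ref{lem:quot}, with $V^\vee$ and its basis $\{x_i\}$ playing the role of $(V,\{e_i\})$ and $Q_w$ the role of $M$: the congruence kernel of $\ext^dV^\vee\twoheadrightarrow\ext^dQ_w$ is generated by the relations obtained by wedging those above with $x_I$, $I\in\binom{[n]}{d-1}$. Because $x_i\wedge x_I=0$ when $i\in I$ and $x_i\wedge x_I=x_{I+i}$ otherwise, such a relation simplifies to
\[
\sum_{i\in J\smallsetminus I}w_{J-i}x_{I+i}\ \sim\ \sum_{i\in(J\smallsetminus I)\smallsetminus\{j\}}w_{J-i}x_{I+i}.
\]
This is trivial when $j\in I$, and when $j\in J\smallsetminus I$ it is exactly the bend relation of the element $\sum_{i\in J\smallsetminus I}w_{J-i}x_{I+i}\in\ext^dV^\vee$ omitting the $i=j$ term. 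As $(J,I,j)$ range over all admissible values, these are precisely the bend relations of the expressions $\sum_{i\in A\smallsetminus B}w_{A-i}x_{B+i}$ of \eqref{eq:general-xw-expression}, with $A=J$ and $B=I$; here one uses that the monomials $x_{B+i}$ for $i\in A\smallsetminus B$ are pairwise distinct basis vectors of $\ext^dV^\vee$, so that ``the bend relations of the expression'' is unambiguous. This establishes the first assertion.

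Finally, for the displayed special case I would set $B=A\smallsetminus\{p,q\}$ with $p\ne q$ in $A$, so that $A\smallsetminus B=\{p,q\}$, $B+p=A-q$, and $B+q=A-p$; then \eqref{eq:general-xw-expression} is the two-term sum $w_{A-p}x_{A-q}+w_{A-q}x_{A-p}$, and its two bend relations together with transitivity give $w_{A-p}x_{A-q}\sim w_{A-q}x_{A-p}$, which is \eqref{eq:key-rel}. The main thing to get right is the bookkeeping of which wedges $x_i\wedge x_I$ vanish, along with the relabelling $A-i=J-i$ and $B+i=I+i$; once that is in place the statement follows mechanically from Lemma \ref{lem:quot}.
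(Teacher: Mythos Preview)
Your proof is correct and follows essentially the same route as the paper's: identify the generators of $\bend(-\wedge w)$ via the linear forms $\alpha_J$ coming from Proposition~\ref{prop:wedgekerTLS}, apply Lemma~\ref{lem:quot}, and simplify using $x_i\wedge x_I=0$ for $i\in I$. You are in fact slightly more careful than the paper in two places --- you justify that $\bend(L)$ is already generated by the bend relations of a spanning set of $L$, and you check explicitly that the wedged relations coincide with the bend relations of \eqref{eq:general-xw-expression} (noting that the monomials $x_{B+i}$ are distinct basis vectors) --- but these are refinements of the same argument rather than a different approach.
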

\begin{proof}
  We saw in the proof of Proposition \ref{prop:wedgekerTLS} that the module $\im((-\wedge w)^\vee) \subset V^\vee$ is spanned by the linear forms $\sum_{i\in A} w_{A-i} x_j$, so by definition the congruence $\bend(-\wedge w)$ on $V^\vee$ is generated by the bend relations of these linear forms.
  Thus, by Lemma \ref{lem:quot}, the congruence kernel of $\ext^d V^\vee \to \ext^d
  Q_w$ is generated by the bend relations of the expressions of the form
\[
x_B \wedge \left(\sum_{i\in A} w_{A-i} x_i\right)
\]
for $B\in \binom{[n]}{d-1}$.  Since the squares of the $x_i$ vanish, this expression becomes $
\sum_{i\in A\smallsetminus B} w_{A-i} x_{B+i}.$ 
\end{proof}

\begin{lemma}\label{lem:vanishing}
For any nonzero $w\in \ext^d V$, in $\ext^d Q_w$ we have that $w_I=0$ implies $x_I=0$, and the converse holds if at least one $x_I$ is nonzero.
\end{lemma}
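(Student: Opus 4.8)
The plan is to argue entirely inside the presentation of $\ext^d Q_w$ furnished by Lemma \ref{lem:presentation}: it is $\ext^d V^\vee$, free on the $x_I$ with $I\in\binom{[n]}{d}$, modulo the bend relations of the expressions $E_{A,B}:=\sum_{i\in A\smallsetminus B}w_{A-i}\,x_{B+i}$ for $A\in\binom{[n]}{d+1}$, $B\in\binom{[n]}{d-1}$. The one device I will use repeatedly is this: if $(A,B)$ can be chosen so that $E_{A,B}$ has exactly one term of nonzero coefficient, say $w_{A-i_0}x_{B+i_0}$, then the bend relation that drops the $i_0$-term reads $w_{A-i_0}x_{B+i_0}\sim 0$, and since a nonzero element of the semifield $S$ is a unit this forces $x_{B+i_0}=0$ in $\ext^d Q_w$. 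Both halves of the lemma come down to exhibiting such $(A,B)$, chosen so that a minimality hypothesis annihilates all the unwanted terms.

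For the first assertion, given $I$ with $w_I=0$ I would take $J\in\supp(w)$ minimizing $|J\triangle I|$ (possible as $w\neq 0$; the minimum is positive since $I\notin\supp(w)$), pick any $i_0\in I\smallsetminus J$, and set $A=J+i_0$, $B=I-i_0$. Then the term of $E_{A,B}$ with $i=i_0$ is $w_J x_I$, and every other term has coefficient $w_{(J-j)+i_0}$ for some $j\in J\smallsetminus I$; but $(J-j)+i_0$ differs from $J$ by a single exchange and lies strictly closer to $I$ in symmetric difference, so by minimality it is not in $\supp(w)$ and its coefficient vanishes. Hence $E_{A,B}=w_J x_I$, and the device yields $x_I=0$.

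For the converse, suppose some $x_{I_*}\neq 0$; by the first part $I_*\in\supp(w)$. I would first record the ``propagation'' consequence of the two-term relations \eqref{eq:key-rel}: for $I,I'\in\supp(w)$ differing by a single exchange one has $w_{I'}x_I=w_I x_{I'}$ with $w_I,w_{I'}$ units, so $x_I=0\iff x_{I'}=0$; hence $x_I\neq 0$ for every $I$ in the connected component $C_*$ of $I_*$ in the graph on the vertex set $\supp(w)$ in which two members are joined exactly when they differ by a single exchange. Now let $K$ be any index with $w_K\neq 0$ and let $C_K$ be its component; it suffices to show $C_K=C_*$. If not, choose $(J,J')\in C_K\times C_*$ minimizing $|J\triangle J'|$ --- which is at least $4$, since $J,J'$ are not joined by an exchange --- pick $p'\in J'\smallsetminus J$, and set $A=J+p'$, $B=J'-p'$. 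Then the term of $E_{A,B}$ with $i=p'$ is $w_J x_{J'}$, while every other term has coefficient $w_{(J-b)+p'}$ for some $b\in J\smallsetminus J'$, and $(J-b)+p'$ differs from $J\in C_K$ by a single exchange while lying strictly closer to $J'$ than $J$ does; by minimality it is therefore not in $\supp(w)$, so its coefficient vanishes. Hence $E_{A,B}=w_J x_{J'}$, the device forces $x_{J'}=0$, contradicting $J'\in C_*$. So $C_K=C_*$ and $x_K\neq 0$.

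The bookkeeping --- checking that the named term really is the $i_0$- or $p'$-term, that $A\smallsetminus B$ is the claimed set, and that the modified index sits a distance $2$ closer in symmetric difference --- is routine. The step I expect to be the real obstacle is the disconnected case of the converse: there the two-term relations \eqref{eq:key-rel} cannot link two components at all, and one is forced to invoke the longer bend relations \eqref{eq:general-xw-expression} and lean on the minimality of the chosen cross-component pair to kill every term except the one producing the contradiction. Once that idea is found, the argument is short.
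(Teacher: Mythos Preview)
Your proof is correct, and the engine --- isolating a single surviving term in some $E_{A,B}$ so that the bend relation forces a vanishing --- is the same as the paper's. The packaging differs in two ways worth noting.

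For the forward implication, the paper inducts on $\|I\|_w := \min_{K\in\supp(w)}|I\smallsetminus K|$ and kills the extra terms by invoking the inductive hypothesis on the $x$-\emph{factors} $x_{I+i-j}$; you instead choose $J\in\supp(w)$ closest to $I$ and kill the extra terms by showing their $w$-\emph{coefficients} $w_{(J-j)+i_0}$ vanish directly from minimality. Your version is a one-shot argument with no induction, which is cleaner.

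For the converse, the paper simply swaps the roles of $w$ and $x$ and reruns the same induction on $\|I\|_x := \min_{K\in\supp(x)}|I\smallsetminus K|$. You instead stay anchored to $\supp(w)$: propagate nonvanishing of $x$ along the exchange graph, then use a minimal cross-component pair $(J,J')$ to derive a contradiction. The paper's route is more symmetric and slightly shorter (the second half is a near-verbatim replay of the first), while yours has the conceptual advantage of working only with the given data $\supp(w)$ rather than the a priori unknown set $\supp(x)$; as a byproduct it shows that whenever some $x_I$ survives in $\ext^d Q_w$, the exchange graph on $\supp(w)$ must already be connected.
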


\begin{proof}
  We first show that $w_I=0$ implies $x_I=0$.  Let $\supp(w) := \{I\in \binom{[n]}{d}~|~w_I \ne 0\}$.  Suppose $I\notin \supp(w)$, and define
\[
\|I\|_w := \min_{K\in \supp(w)} |I\smallsetminus K|.
\]
We will prove that $x_I=0$ for all $I\notin \supp(w)$ by induction on $\|I\|_w$.  If $\|I\|_w=1$ then there exists $K\in \supp(w)$ with $|I\smallsetminus K| = 1$.  Then \eqref{eq:key-rel} gives
\[
w_Kx_I = w_Ix_K.
\] 
Since $w_K \ne 0$ and $w_I = 0$, it follows that $x_I=0$.  Now suppose $x_I=0$ for all $I$
with $0 < \|I\|_w < m$.  Given $I$ such that $\|I\|_w=m$, let $K\in \supp(w)$ be an element such
that $|I\smallsetminus K| = m$, and consider the bend relations of the expression
\eqref{eq:general-xw-expression} for $A=K+j$ and $B=I-j$ for some $j\in I \smallsetminus K$.  This gives the bend
relations of the expression
\[
w_K x_I + \sum_{i \in K \smallsetminus I} w_{K+j-i} x_{I+i-j}. 
\]
Since $\|I+i-j\|_w=m-1$, the inductive hypothesis says that each term in this summation over $K\smallsetminus I$ is zero. For the bend relations of the full expression to be satisfied, we must then have that $w_K x_I=0$, and since $w_K \neq 0$, this implies that $x_I=0$.

The other direction follows from essentially the same argument with the roles of $x$ and $w$
reversed. Let $\supp(x) := \{I\in \binom{[n]}{d} ~| ~ x_I\neq 0\}$, which we assume is nonempty, and
define $\|I\|_x := \min_{K\in \supp(x)}|I\smallsetminus K|$.  If $\|I\|_x = 1$ then there is some
$K\in \supp(x)$ realizing this distance, and in the relation \[ w_Ix_K = w_K x_I \] $x_I$ is zero
but $x_K$ is nonzero, so $w_I$ must be zero. Now suppose inductively that $w_I = 0$ for all $I$ with
$0< \|I\|_x < m$. Given an $I$ such that $\|I\|_x = m$, let $K\in \supp(x)$ realize this distance
and set $A=I+j$ and $B=K-j$ for some arbitrary $j\in K\smallsetminus I$. Then the bend relations of
the expression 
\[ 
w_I x_K + \sum_{i \in I \smallsetminus K} w_{I+j-i} x_{K+i-j} 
\]
are satisfied. For each $i$, $\|I+j-i\|_x = m-1$, so $w_{I+j-i} = 0$ by inductive hypothesis, and $x_K$ is nonzero,
so it must be the case that $w_I=0$. 
\end{proof}

\begin{proof}[Proof of the main theorem]
  The argument is by manipulation of the presentation from Lemma \ref{lem:presentation}.  As in the preceding proof, we set $\supp(w) := \{I\in \binom{[n]}{d}~|~w_I \ne 0\}$.

  Suppose first that $w$ is a tropical Pl\"ucker vector, so by definition, for any $X,Y$ with $|Y| = d+1$ and
  $|X|=d-1$, the bend relations of the summation
\[
\sum_{i\in Y\smallsetminus X} w_{Y-i}w_{X+i}
\]
are satisfied.

We first show that these tropical Pl\"ucker
relations ensure that if $I$ and $I'$ are both in $\supp(w)$ then
\begin{equation}\label{eq:scalar-rel}
x_{I'} = (w_{I'}/w_I)x_I,
\end{equation}
so $x_I$ and $x_{I'}$ are identified up to a scalar; by Lemma \ref{lem:vanishing}, $x_J = 0$ for all $J\notin\supp(w)$, so we will conclude that $\ext^d Q_w$ is generated by a single element.

From the form of the tropical Pl\"ucker relations above one can show that if $I,I' \in \supp(w)$ are
such that $|I\smallsetminus I'| = m$ then there exists at least one pair $J,J'\in \supp(w)$ such
that
\begin{align*}
|I\smallsetminus J| = |I' \smallsetminus J'|=1, \\
|I \smallsetminus J'| = |I' \smallsetminus J| = m-1.
\end{align*}
To see this, take $X=I-j$ and $Y=I'+j$ for some $j\in I\smallsetminus I'$. The $i=j$ term in the
above sum is $w_Iw_{I'}$, and the remaining terms are all of the form $w_J w_{J'}$ for $J,J'$ as
above, and the bend relations imply that if one term in the sum is nonvanishing then at least one
other must also be nonvanishing. Repeating this $m$ times, we find a chain from $I$ to $I'$ along
which \eqref{eq:key-rel} can be iteratively applied to yield \eqref{eq:scalar-rel}.  Thus $x_{I}$
and $x_{I'}$ are proportional if $I,I'\in \supp(w)$.  Therefore $\ext^d Q_w$ is generated by
a single element.

It remains to show that it is in fact free. By Lemma \ref{lem:presentation}, it suffices to show that
when we use the equation \eqref{eq:key-rel} to transform each expression \eqref{eq:general-xw-expression} into an
expression purely in terms of some single $x_{I_0}$, the coefficient will be expressed as a sum for which the bend relations are satisfied.  Consider an expression
\[
\sum_{i\in A\smallsetminus B} w_{A-i} x_{B+i};
\]
If all of the $x_{B+i}$ appearing in the sum vanish then the entire expression is zero and hence its
bend relations cannot yield a nontrivial relation between distinct multiples of any $x_I$, so assume there is some $j\in A\smallsetminus B$ for which $x_{B+j} \ne 0$.  Multiplying by $w_{B+j}$ and using \eqref{eq:key-rel} yields the expression
\[
\left(\sum_{i\in A\smallsetminus B} w_{A-i} w_{B+i} \right) x_{B+j},
\]
and the bend relations of the sum appearing as the coefficient here are indeed satisfied precisely
because $w$ is a tropical Pl\"ucker vector.  Thus we have shown that $\ext^d L_w$ is indeed
free rank 1.

Now assume that $\ext^d Q_w$ is free of rank one.  There must exist at least one
$I_0\in \binom{[n]}{d}$ such that $x_{I_0}$ is nonzero in $\ext^d Q_w$, and any other monomial
$x_{I'}$ must be equal to a (possibly zero) scalar times $x_{I_0}$.  In the expression \eqref{eq:general-xw-expression}, if there is some $j$ such that $x_{B+j}$ is nonzero then multiplying through by $w_{B+j}$ and applying \eqref{eq:key-rel}, we find that the bend relations of the expression
$\sum_{i\in A \smallsetminus B} w_{A-i} w_{B+i}x_{B+j}$
are satisfied.  By the hypothesis that $\ext^d Q_w$ is free, it follows that the bend relations of 
\[
\sum_{i\in A \smallsetminus B} w_{A-i} w_{B+i}
\]
are satisfied.   If $x_{B+i} = 0$ for all $i\in A\smallsetminus B$ then by Lemma \ref{lem:vanishing}, each $w_{B+i}$ is also zero, and so the bend relations of $\sum_{i\in A \smallsetminus B} w_{A-i} w_{B+i}$ are trivially satisfied.
\end{proof}

\begin{example}
Let us return to the graphic matroid $M(K_4)$ from Example \ref{ex:graphic}.  As mentioned there, the module $Q_w$ is the quotient of the free module on $x_1,\ldots,x_6$ by the bend relations of the linear forms corresponding to the rows in the matrix depicted in that example.  By Lemma \ref{lem:vanishing}, in $\ext^3Q_w$ we have $x_I = -\infty$ for precisely the four hyperplanes (in the matroid sense) $I = \{1,2,3\},\{1,4,5\},\{2,5,6\},\{3,4,6\}$.  For instance, to see that $x_{123} \sim -\infty$ we can take the bend relations from the first row, namely $x_1+x_2 \sim x_1+ x_3 \sim x_2+x_3$, and wedge these with $x_{12}$.  On the other hand, all the variables corresponding to basis elements are identified: $x_I = x_J$ for all $I,J\in\mathcal{B}$.  For instance, wedging the bend relations of the first row with $x_{34}$ shows that $x_{134} \sim x_{234}$.  Thus $\ext^3 Q_w \cong \mathbb{B}$ is free of rank one, and it is generated by $x_I$ for any basis $I\in \mathcal{B}$ of the matroid.  
\end{example}

Generalizing the previous example, we see that for any matroid with set of bases $\mathcal{B} \subset \binom{[n]}{d}$, the module $\ext^d Q_w \cong \mathbb{B}$, where $w = \sum_{I\in\mathcal{B}}e_I$, identifies all $x_I$ for $I\in\mathcal{B}$ with the unique generator of this module and identifies $x_I$ for $I$ a matroid hyperplane with $-\infty$.

\begin{remark}
One might wonder what $\ext^d Q_w$ can look like when $w$ fails to be a Pl\"ucker vector.  From the proof, it is easy to see that if all components of $w$ are nonvanishing then $\ext^d Q_w$ will be a nontrivial quotient of a free rank 1 module.  Indeed, quotienting $\ext^d V^\vee$ by only the 2-term relations of expression \eqref{eq:key-rel} produces a free module on a single generator, and then quotienting by the additional relations with more than 2 terms from \eqref{eq:general-xw-expression} yields a nontrivial quotient of this.
\end{remark}

\subsection{Recovering the Pl\"ucker vector of a tropical linear space}

Let $L_w \subset V$ be a rank $d$ tropical linear space associated to a tropical Pl\"ucker vector $w\in\PP\left(\ext^d V\right)$.  Speyer used the perspective of matroid polytopes to prove that $w$ is uniquely determined by $L_w$, at least when $S = \T$ (and when the underlying matroid is uniform, though this latter assumption was removed in \cite{Maclagan-Sturmfels}) \cite[Proposition 2.8]{Speyer}.  We shall provide a computational method for producing $w$ from $L_w$ which more closely mimics the classical situation.  

Recall that if $L \subset k^n$ is a linear subspace of dimension $d$, then $\ext^d L \subset \ext^d k^n$ is one-dimensional, and the coefficients
expressing a generator for this one-dimensional subspace in the standard basis form
the Pl\"ucker vector for $L$.  Since our exterior algebra is defined for quotients rather than submodules, a duality is necessary to translate the top tropical wedge power into a vector.  

\begin{proposition}
  Let $L_w\subset V$ be a tropical linear space of rank $d$.  The linear dual of the quotient map
  $\ext^d V^\vee \twoheadrightarrow \ext^d Q_w$ identifies $\left( \ext^d Q_w \right)^\vee$ with the tropical Pl\"ucker vector $w \in \PP\left(\ext^d V\right)$.
\end{proposition}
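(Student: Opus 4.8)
The plan is to read off what we need from the proof of Theorem~\ref{thm:main}. Since $L_w$ is a tropical linear space, its Pl\"ucker vector $w$ satisfies the tropical Pl\"ucker relations, so by Theorem~\ref{thm:main} the module $\ext^d Q_w$ is free of rank one; the first task is to make an explicit generator visible together with the coefficients expressing all of the $x_I$ in terms of it. Fix $I_0\in\supp(w)$ and write $\xi$ for the image of $x_{I_0}$ under the quotient map $q\colon\ext^d V^\vee\twoheadrightarrow\ext^d Q_w$. The scalar relations \eqref{eq:scalar-rel} established in the course of proving the main theorem say that $q(x_{I})=(w_{I}/w_{I_0})\,\xi$ whenever $I\in\supp(w)$, while Lemma~\ref{lem:vanishing} gives $q(x_J)=0$ whenever $J\notin\supp(w)$. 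Since $w_J=0$ for $J\notin\supp(w)$, these two facts combine into the single formula $q(x_I)=(w_I/w_{I_0})\,\xi$ valid for every $I\in\binom{[n]}{d}$, confirming in particular that $\xi$ generates $\ext^d Q_w$.

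Next I would dualize. Because $q$ is surjective, the dual map $q^\vee\colon(\ext^d Q_w)^\vee\to(\ext^d V^\vee)^\vee$ is injective; and since $\ext^d Q_w$ is free of rank one over the semifield $S$, so is its dual, with generator the functional $\psi$ determined by $\psi(\xi)=1$. Under the canonical identification $(\ext^d V^\vee)^\vee\cong\ext^d V$ coming from the perfect pairing $\ext^d V\times\ext^d V^\vee\to S$ (for which $\{e_I\}$ and $\{x_I\}$ are dual bases), a functional $\phi\in(\ext^d V^\vee)^\vee$ is identified with the vector $\sum_I\phi(x_I)\,e_I\in\ext^d V$. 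Applying this to $\phi=\psi\circ q=q^\vee(\psi)$ and using the formula of the previous paragraph, the $e_I$-coefficient of $q^\vee(\psi)$ is $\psi\big(q(x_I)\big)=\psi\big((w_I/w_{I_0})\,\xi\big)=w_I/w_{I_0}$, so $q^\vee(\psi)=w_{I_0}^{-1}\,w$.

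It then only remains to assemble these: the submodule $\bigl(\ext^d Q_w\bigr)^\vee\subset\ext^d V$, being the image under the injection $q^\vee$ of the free rank-one module on $\psi$, equals $S\cdot q^\vee(\psi)=S\cdot w$, which is precisely the point $w\in\PP(\ext^d V)$. This is the assertion.

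I do not expect a genuine obstacle: the substantive work was carried out in Theorem~\ref{thm:main}, and what remains is bookkeeping for two applications of $(-)^\vee$. The only points needing a little care are that the dual of a surjection is injective and the dual of a free rank-one $S$-module is again free of rank one (both immediate over a semifield), and that one should track the normalization, since $w$ is defined only up to a unit --- so that the final statement is genuinely one about the line $\bigl(\ext^d Q_w\bigr)^\vee$ in $\PP(\ext^d V)$ rather than about a particular vector.
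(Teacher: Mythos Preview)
Your proof is correct and follows exactly the same route as the paper's own argument: invoke Theorem~\ref{thm:main} to get that $\ext^d Q_w$ is free of rank one, then use Lemma~\ref{lem:vanishing} together with the scalar relation~\eqref{eq:scalar-rel} to identify the image of $q^\vee$ with $S\cdot w$. The paper's proof is simply a terse two-sentence summary of what you have written out in full, so there is nothing to add or correct.
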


\begin{proof}
We have shown in Theorem \ref{thm:main} that $\ext^dQ_w \cong S$, so it indeed dualizes to a rank one submodule of $\ext^d V$ and hence a point of the projectivization.  That this point coincides with $w$ follows immediately from Lemma \ref{lem:vanishing} and equation \eqref{eq:scalar-rel}.
\end{proof}


\section{The pairing on $\ext Q_w$}

For each rank $d$ tropical linear space, encoded by a $d$-multivector $w \in \ext^d V$, we have a graded $S$-algebra $\ext Q_w$ satisfying $\ext^k Q_w = 0$ for $k > d$ and $\ext^d Q_w \cong S$.  It is natural then to consider, for each $1 \le k \le d$, the bilinear map
\[
\ext^k Q_w \times \ext^{d-k} Q_w \to \ext^d Q_w \cong S,
\]
and in particular to ask whether this is a perfect pairing.  Felipe Rinc\'on has found a counterexample with $k=1$ to this general assertion and he has kindly allowed us to describe it here.

\begin{example}
Consider $U_{3,4}$, the uniform matroid of rank 3 on $[4] = \{1,2,3,4\}$.  This corresponds to the multivector $w = e_{123} + e_{124} + e_{134} + e_{234} \in \ext^3 V$, where $V \cong S^4$ and $S=\mathbb{B}$.  If the pairing \[\ext^2 Q_w \times \ext^1 Q_w \rightarrow \ext^3 Q_w \cong S\] were perfect then the map $\ext^2 Q_2 \rightarrow Q_w^\vee = L_w$ would be an isomorphism, but we see as follows that it is not injective.  We have 
\begin{eqnarray*}
(x_{12} + x_{34}) \wedge x_1 = x_{134}\\ 
(x_{12} + x_{34}) \wedge x_2 = x_{234}\\ 
(x_{12} + x_{34}) \wedge x_3 = x_{123}\\ 
(x_{12} + x_{34}) \wedge x_4 = x_{124}\\ 
\end{eqnarray*}
and all these $x_{ijk}$ are identified under the quotient map $\ext^3 V \twoheadrightarrow \ext^3 Q_w \cong S$, so the element $x_{12} + x_{34} \in \ext^2 Q_w$ gets mapped to $e_1 + e_2 + e_3 + e_4 \in \ext^3 Q_w$.  The same reasoning shows that $x_{13} + x_{24}$ also gets sent to $e_1+e_2+e_3+e_4$, so it suffices to show that $x_{12} + x_{34}$ and $x_{13} + x_{24}$ remain distinct under the quotient map $\ext^2 V \twoheadrightarrow \ext^2 Q_w$.  Now $Q_w = V^\vee / \bend(x_1+x_2+x_3+x_4)$ so by Lemma \ref{lem:quot} the relations in $\ext^2 Q_w$ are generated by those of the form $x_{ij} + x_{ik} =  x_{ij} + x_{il}$ and $x_{ik} + x_{jk} = x_{ik} + x_{jk} + x_{kl}$; since the only binomials in these involve pairs of indices with an index in common, they cannot not imply the relation $x_{12} + x_{34} = x_{13} + x_{24}$.
\end{example}

On the other hand, while injectivity can fail for $k=1$, surjectivity always holds.

\begin{proposition}
Assume $S$ is a totally ordered semifield. The map $\ext^{d-1} Q_w \to Q_w^\vee = L_w$ is surjective.
\end{proposition}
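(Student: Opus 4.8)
The plan is to reduce the statement to the fact, already at our disposal, that the valuated cocircuit vectors span $L_w$, and then to write down explicit preimages. Write $\Phi\colon \ext^{d-1}Q_w \to Q_w^\vee = L_w$ for the map in question. By construction $\Phi$ is induced by the algebra multiplication $\ext^{d-1}Q_w \times \ext^1 Q_w \to \ext^d Q_w \cong S$ via the identification $\ext^1 Q_w = Q_w$, so $\Phi(\xi)$ is the functional $\eta \mapsto \langle \xi\wedge\eta\rangle$ on $Q_w$, where $\langle - \rangle\colon \ext^d Q_w \cong S$ is a fixed isomorphism (which exists by Theorem \ref{thm:main}). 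Since $S$ is totally ordered and $w$ is a tropical Pl\"ucker vector, Theorem \ref{thm:spanned-by-cocircuits} tells us that $L_w = Q_w^\vee$ is spanned by the valuated cocircuit vectors $\beta_K = \sum_{i\in K^c} w_{K+i}e_i$, $K\in\binom{[n]}{d-1}$. Hence it suffices to show that each $\beta_K$ lies in the image of $\Phi$.

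To do this I would make $\langle - \rangle$ concrete. Fix $I_0\in\supp(w)$ and normalize so that $x_{I_0}\mapsto 1$; then, by equation \eqref{eq:scalar-rel} together with Lemma \ref{lem:vanishing} (both established in the proof of Theorem \ref{thm:main}), the image of $x_J$ in $\ext^d Q_w$ is $w_J/w_{I_0}$ for every $J\in\binom{[n]}{d}$, the case $J\notin\supp(w)$ being the statement that both sides are $0$. Now take $\xi = x_K\in\ext^{d-1}Q_w$ for $K\in\binom{[n]}{d-1}$. Because the $x_i$ square to zero, $x_K\wedge x_i$ equals $x_{K+i}$ if $i\notin K$ and $0$ if $i\in K$; applying $\langle-\rangle$ shows that $\Phi(x_K)$ sends $x_i\mapsto w_{K+i}/w_{I_0}$ for $i\in K^c$ and $x_i\mapsto 0$ for $i\in K$. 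Under $V^{\vee\vee}\cong V$ this functional is exactly $w_{I_0}^{-1}\beta_K$, and since $w_{I_0}^{-1}$ is a unit we conclude $\beta_K\in\im\Phi$ for every $K$. Combined with the previous paragraph, $\Phi$ is surjective.

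The genuine mathematical content is imported from Theorem \ref{thm:spanned-by-cocircuits} --- which is precisely where the total-ordering hypothesis is used --- together with the explicit description of $\ext^d Q_w$ obtained in the proof of the main theorem; everything else is bookkeeping. The one point to watch, rather than a real obstacle, is that the normalization of $\ext^d Q_w\cong S$ must be fixed once and for all (via the single choice of $I_0$) before reading off the $\Phi(x_K)$, so that all the cocircuits are recovered simultaneously, each scaled by the same unit $w_{I_0}^{-1}$; this is automatic but worth making explicit, and it is essentially the only place the argument could go astray.
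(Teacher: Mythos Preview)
Your proof is correct and follows essentially the same route as the paper's: both reduce to Theorem \ref{thm:spanned-by-cocircuits} and then compute that the basis element $x_K\in\ext^{d-1}Q_w$ maps to (a fixed unit multiple of) the cocircuit vector $\beta_K$. Your notation is slightly cleaner---the paper overloads the symbol $K$ for both the fixed generator of $\ext^d Q_w$ and the index of the cocircuits---and your closing remark about the normalization being uniform is a nice touch, though as you say it is not actually needed for the argument (each $\beta_K$ lying in the image up to a unit is enough, even if the unit varied with $K$).
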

\begin{proof}
Since $L_w$ is spanned by the valuated cocircuit vectors of $w$ (see Theorem \ref{thm:spanned-by-cocircuits}), it suffices to show that any valuated cocircuit vector  is in the image of the composition
\[
\ext^{d-1} V^\vee \to \ext^{d-1} Q_w \to Q_w^\vee = L_w.
\]
Fix a generator $x_{K} \in \ext^d Q_w$, and recall that we have the relations $x_{J} = (w_J/w_K)x_K$.  
Given $I\in \binom{[n]}{d-1}$, this map sends $x_I$ to the element of $Q_w^\vee$ given by 
\[
x_j \mapsto x_{I+j} = \left( \frac{w_{I+j}}{w_K} \right)x_K. 
\]
In other words, $x_I$ is sent to the vector $u_I := \sum_j \left(\frac{w_{I+j}}{w_K} \right) e_j \in V$. As $I$ ranges over the $d-1$ element subsets of the ground set $[n]$, the vectors $u_I$ range over all the valuated cocircuit vectors of $w$, which span $L_w$ by Theorem \ref{thm:spanned-by-cocircuits}.
\end{proof}

An interesting open question is to give some kind of explicit description of the module of linear relations among the valuated cocircuits of a tropical Pl\"ucker vector $w$.  Since $\ext^{d-1} Q_w$ is presented as an explicit quotient of the free module with one generator for each size $d-1$ subset $K$ of $[n]$ (i.e., one generator for each cocircuit vector $\beta_K$), the question can be reformulated as asking for an explicit description of the congruence kernel of the map $\ext^{d-1} Q_w \to Q_w^\vee = L_w$. We leave this as an open question.

\bibliographystyle{amsalpha}
\bibliography{bib}

\end{document}